\setlist{noitemsep,labelwidth=*,leftmargin=*,align=left}
\setlist[enumerate,1]{label=(\alph*)}
\setlist[description]{font=\normalfont,leftmargin=!}
\renewcommand{\SetProgSty}[1]{\renewcommand{\ProgSty}[1]{\textnormal{\csname#1\endcsname{##1}}\unskip}}
\newcommand{\algass}{\ensuremath{\leftarrow}}
\renewcommand{\O}[1]{\ensuremath{\mathcal{O}(#1)}}
\theoremstyle{definition}
\newtheorem{theorem}{Theorem}[section]
\newtheorem{corollary}[theorem]{Corollary}
\newtheorem{definition}[theorem]{Definition}
\newtheorem{lemma}[theorem]{Lemma}
\newtheorem{problem}[theorem]{Problem}
\newtheorem{observation}[theorem]{Observation}
\newtheorem{notation}[theorem]{Notation}
\newtheorem{assumption}[theorem]{Assumption}
\theoremstyle{definition}
\newcommand{\m}{m}
\newcommand{\M}{\ensuremath{\mathcal{M}}}
\newcommand{\Set}[1]{\left\{ #1 \right\}}
\DeclareMathOperator*{\F}{\mathcal{F}}
\pgfplotsset{compat=1.18}
\journal{Journal of ...}
\begin{document}
	
\begin{frontmatter}
		
    \title{Parametric Matroid Interdiction}

    \author[a]{Nils Hausbrandt}\corref{mycorrespondingauthor}
    \cortext[mycorrespondingauthor]{Corresponding author}
    \ead{nils.hausbrandt@math.rptu.de}
    \author[a]{Oliver Bachtler}
    \ead{o.bachtler@math.rptu.de}
    \author[a]{Stefan Ruzika}
    \ead{stefan.ruzika@math.rptu.de}
    \author[b]{Luca E. Sch\"afer}
    \ead{lucaelias.schaefer@aol.com}
    
    \address[a]{Department of Mathematics, University of Kaiserslautern-Landau, 67663 Kaiserslautern, Germany}
    
    \address[b]{Comma Soft AG, 53229 Bonn, Germany}
    
    \begin{abstract}
    We introduce the parametric matroid one-interdiction problem.
    Given a matroid, each element of its ground set is associated with a weight that depends linearly on a real parameter from a given parameter interval.
    The goal is to find, for each parameter value, one element that, when being removed, maximizes the weight of a minimum weight basis.
    The complexity of this problem can be measured by the number of slope changes of the piecewise linear function mapping the parameter to the weight of the optimal solution of the parametric matroid one-interdiction problem.
    We provide two polynomial upper bounds as well as a lower bound on the number of these slope changes.
    Using these, we develop algorithms that require a polynomial number of independence tests and analyse their running time in the special case of graphical matroids.
    \end{abstract}
    
    \begin{keyword}
        Interdiction \sep Parametric Optimization \sep Matroid \sep Minimum Spanning Tree
    \end{keyword}
    
    \end{frontmatter}
	
\section{Introduction}
Parametric matroid interdiction  combines three important areas of research in mathematical programming: matroid theory, parametric optimization, and the concept of interdiction in optimization problems. Before introducing parametric matroid interdiction problems formally, we briefly highlight each of these concepts and outline some key findings.

Matroids were first introduced by \cite{whitney1935} to generalize linear independence in linear algebra. Since independence is at the heart of many optimization problems, matroid theory is an important and well-studied area of research, cf.~\cite{wilson1973introduction,welsh2010matroid,Oxl11} with numerous applications, for example in combinatorial optimization, network theory and coding theory, cf.~\cite{white1992matroid,el2010index,kveton2014matroid,tamo2016optimal,ouyang2021importance}.

In parametric optimization, the objective function value of a feasible solution depends on the solution itself and, additionally, on a real-valued parameter.
Consequently, one is interested in finding an optimal solution to the problem for every possible parameter value.
A solution of the problem is given by a partition of the parameter interval into subintervals together with a feasible solution that is optimal for every parameter value in the corresponding subinterval.
The problem obtained by fixing the parameter to a fixed value is called non-parametric problem.
The piecewise linear function mapping the parameter to the optimal solution of the associated non-parametric problem is called optimal value function.
The points of slope change of the optimal value function are called breakpoints.
Thus, the number of breakpoints constitutes a natural complexity measure for parametric optimization problems.
Many parametric optimization problems are intractable, this means that their optimal value function can have super-polynomially many breakpoints, cf.~\cite{CarstensenNetwork, Ruhe} for the parametric minimum cost flow problem, cf.\ \cite{CarstensenParametric,Nikolova,Gajjar} for the parametric shortest path problem and \cite{Gassner} for the parametric assignment problem.
So far, research in parametric optimization focused on approaches that compute the optimal value function exactly, cf.\ \cite{eisner1976mathematical} and, only recently, approximation methods were considered. \cite{Ruzika} have constructed an FPTAS for a general class of parametric optimization problems (including the three problems mentioned above) in case that the non-parametric variant is solvable in polynomial time or admits an FPTAS.
In contrast to the intractable problems mentioned above, there are also parametric problems that admit polynomial time algorithms. Examples are the parametric maximum flow problem, cf.\ \cite{Arai, GGTAlgorithmus, Mccormick, {Scutella}} for different versions and the parametric matroid problem with its special case of parametric minimum spanning trees, see \cite{gusfield1979bound,katoh1983total,Agarwal,ParaMST}.

Interdiction problems investigate to which extent an optimization problem can be maximally interfered by a limited intervention.
These problems were first studied by \cite{harris1956fundamentals} and have become a highly topical area of research in recent years, cf.\ \cite{6b1f75254ade48c1a04886babfa6508a,SMITH2020797,NGUYEN2022239}.
An interdiction problem can be thought of as a game between two players in which the first player optimizes his objective function value and the other, called the interdictor, tries to disrupt the first player's objective as much as possible.
The operations of the interdictor have certain interdiction costs and are constrained by an interdiction budget.

Each of these three concepts---matroid theory, parametric optimization, and interdiction---has been well explored in various settings and a listing of the existing literature is beyond the scope of this article. There is also some work that considers some specific combination of these concepts.
\cite{linhares} have shown hardness results for matroid rank interdiction with an arbitrary non-negative interdiction budget.
We also refer to \cite{joret2015reducing} for the study of rank interdiction of matroids.
\cite{chestnut2017interdicting} have considered approximations for interdiction problems with \{0-1\}-objectives, including the matroid interdiction problem with submodular interdiction costs.
\cite{frederickson1998algorithms} have considered a related problem that considers the perturbability of matroids.

There is a large body of literature on minimum spanning tree interdiction. One of the first problems studied, in which the interdiction budget and the interdiction cost of each edge are equal to one, has been shown to be solvable in polynomial time, cf.\ \cite{HSU,IWANO,Bhattacharya,ShenHon}.
In the literature, the problem is often referred to as the most vital edge problem, since it asks for an edge (most vital edge) that, when removed, increases the cost of a minimum spanning tree as much as possible.
The $k$-most vital edge problem, where the interdiction budget is equal to a natural number $k >1$, has been proven to be $\mathsf{NP}$-hard, cf.\ \cite{lin1993most,FREDERICKSON1999244} even on complete graphs with binary edge weights, cf.\ \cite{Vanderpooten}.
For a fixed $k$, there are several algorithms that solve the $k$-most vital edge problem in polynomial time, cf.\ \cite{618087,LIANG19971889,LIANG2001319,BAZGAN20122888}.
\cite{GuoShrestha} have shown hardness results for the general minimum spanning tree interdiction problem.
We refer to \cite{Zenklusen} and \cite{linhares} for constant factor approximations that can be extended for the metric traveling salesman interdiction problem.
\cite{wei2021integer} have considered integer programming formulations.
\cite{NGUYEN2022239} have considered the minimum spanning tree perturbation problem, where the goal is to spend a fixed budget to increase the weight of the edges in order to increase the weight of the minimum spanning tree as much as possible.

In a parametric interdiction problem, the optimal objective function value of a feasible solution to the interdiction problem depends not only on the solution itself, but also on a real parameter from a given parameter set.
The goal is to find an optimal solution to the interdiction problem for each possible parameter value.
The function mapping the parameter to the corresponding optimal objective function value of the non-parametric interdiction problem is called optimal interdiction value function.
The points of slope change, called changepoints, provide a measure of complexity, since at such a point either the optimal interdiction strategy or the optimally interdicted solution changes.

Parametric interdiction problems open a large gap of research.
Although, there are a few interdiction problems that consider fuzzy arc lengths, cf.\ \cite{Fuzzy}, stochastic arc lengths, cf.\ \cite{ZHANG201862, Nguyen2022,punla2022shortest} or stochastic capacities, cf.\ \cite{Cormican}, the parametric shortest path interdiction problem is the only parametric interdiction problem investigated so far, cf.\ \cite{LinChern}.
They have constructed an algorithm whose running time is linear in the number of breakpoints of the parametric shortest path problem. Therefore, in general, the algorithm does not have a polynomial running time.
We also note that \cite{CHERN1995580} have performed a parametric analysis for two generalizations of the shortest path problem that can be applied to the interdiction of network flows.

There are at least three reasons that parametric interdiction is an exciting area of research.
First, since uncertainty is described by the parameter in the objective function, the problems can play an important role in robust optimization since an optimal solution provides a solution for infinitely many scenarios.
Second, if an optimal solution to the problem requires super-polynomially many interdiction strategies, no polynomial time algorithm can exist, even if $\mathsf{P}= \mathsf{NP}$, which strongly motivates the design of approximation algorithms.
Third, many interdiction problems are already $\mathsf{NP}$-hard, so again one cannot expect polynomial-time algorithms for their parametric version.

\paragraph{Our contribution}
In this article, we formally introduce the parametric matroid one-interdiction problem, which has not been investigated in the literature so far.
An element that, when removed from the matroid, increases the weight of a minimum weight basis as much as possible is called most vital element.
The goal is to compute a most vital element and the corresponding objective function value for each parameter value from a given parameter interval.
A point of slope change of the corresponding optimal interdiction value function mapping the parameter to the optimal objective function value of the matroid one-interdiction problem is called a changepoint.
We prove two upper bounds on the number of changepoints, namely that their number is in $\mathcal{O}(m^2 k^{\frac{1}{3}}\alpha(k))$ and $\mathcal{O}(mk^2)$ as well as a lower bound of $\Omega(m k^{\frac{1}{3}})$.
Here, $\alpha$ is a functional inverse of Ackermann's function, $m$ is the number of elements and $k$ is the rank of the matroid. 
We show some structural properties of the problem and, based on them, develop algorithms whose running time depends on different matroid operations that can be implemented in polynomial time for graphical matroids.

\paragraph{Outline}
In \cref{sec:prelim} we introduce the required preliminaries for this paper, including the notation regarding matroids and graphs, as well as the parametric matroid and minimum spanning tree one-interdiction problems.
Afterwards, we look at structural properties of the associated objective value functions of these problems and restrict the number of slope changes of those in \cref{sec:structural-properties}.
Finally, we use these structural properties to develop algorithms for both problems in \cref{sec:algorithms}.

\section{Preliminaries}\label{sec:prelim}

\textbf{Matroids.}
For a set $A$ and a singleton $\{b\}$, we use the notation $A+b$ or $A-b$ for $A\cup \{b\}$ or $A\setminus \{b\}$, respectively.
A tuple $\M = (E,\F)$ where $E$ is a finite set and $\emptyset\neq\F\subseteq 2^E$ is called \emph{matroid} if the following properties hold:
\begin{enumerate}
	\item The empty set $\emptyset$ is contained in $\F$.
	\item If $A\in\F$ and $B\subseteq A$, then also $B\in\F$.
	\item If $A,B \in\F$ and $\vert B\vert < \vert A\vert$, then there exists an element $a\in A\setminus B$ such that $B + a\in\F$.
\end{enumerate}
We denote the cardinality of $E$ by $\m$.
The subsets of $E$ in $\F$ are called \emph{independent sets of \M} and all other subsets of $E$ are called \emph{dependent}.
An inclusion-wise maximal independent set of \M{} is a \emph{basis of \M}, and  
the \emph{rank} $k\coloneqq rk(\M)$ of a matroid is the cardinality of a basis. A minimal dependent set is called a \emph{circuit}. We will need the following property of circuits later.
\begin{lemma}[{\cite[Proposition~1.4.12]{Oxl11}}]
	\label{strong-circuit-elimination-axiom}
	Let \M{} be a matroid, $C_1$, $C_2$ be circuits of \M, and $e$, $f\in E$ such that $e\in C_1\cap C_2$ and $f\in C_1\setminus C_2$.
	Then there exists a circuit $C$ such that $f\in C \subseteq (C_1\cup C_2)-e$.
\end{lemma}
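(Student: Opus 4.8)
The plan is to argue by contradiction, choosing a counterexample $(C_1,C_2,e,f)$ for which $|C_1\cup C_2|$ is as small as possible, and to use only the weak circuit elimination property as an external tool. Recall that weak elimination---for distinct circuits $D_1,D_2$ with $x\in D_1\cap D_2$, the set $(D_1\cup D_2)-x$ contains a circuit---follows directly from the independence axioms defining \M{} and may be taken as a standard prior result. The key idea is that a single application of weak elimination need not produce a circuit through the prescribed element $f$, so instead I would chain two applications of the (inductive) strong elimination hypothesis on strictly smaller unions, each time using a carefully chosen \emph{witness} element that lies outside the new union to guarantee the required decrease in size.

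Concretely, the argument proceeds as follows. Since $f\in C_1\setminus C_2$ we have $C_1\neq C_2$, so weak elimination yields a circuit $C_3\subseteq(C_1\cup C_2)-e$; if $f\in C_3$ we are done, so assume $f\notin C_3$. Because distinct circuits are incomparable and $f\in C_1\setminus C_3$, we get $C_3\not\subseteq C_1$, so there is some $g\in C_3\setminus C_1$; as $C_3\subseteq C_1\cup C_2$ and $e\notin C_3$, this forces $g\in C_2\setminus C_1$ with $g\neq e$ and $g\neq f$. Now apply the inductive hypothesis to the pair $C_2,C_3$ with eliminated element $g\in C_2\cap C_3$ and target $e\in C_2\setminus C_3$: since $f\in C_1\setminus(C_2\cup C_3)$ we have $|C_2\cup C_3|<|C_1\cup C_2|$, so minimality produces a circuit $C_4\ni e$ with $C_4\subseteq(C_2\cup C_3)-g$, and in particular $f\notin C_4$. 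Finally, apply the inductive hypothesis to $C_1,C_4$ with eliminated element $e\in C_1\cap C_4$ and target $f\in C_1\setminus C_4$: since $g\notin C_1\cup C_4$ we again obtain $|C_1\cup C_4|<|C_1\cup C_2|$, so minimality yields a circuit $C\ni f$ with $C\subseteq(C_1\cup C_4)-e\subseteq(C_1\cup C_2)-e$, contradicting the choice of counterexample.

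I expect the main obstacle to be the bookkeeping of the two strict size decreases rather than any deep idea: the whole argument hinges on exhibiting, at each stage, an element of the old union that is absent from the new one---namely $f$ in the first reduction and $g$ in the second. Verifying these relies on the elementary facts that distinct circuits are incomparable and that the element eliminated by weak or strong elimination genuinely disappears, both of which I would record explicitly before the two reductions. Once the placement of $f$ and $g$ is pinned down, the nested double application of the inductive hypothesis closes the contradiction with no further computation.
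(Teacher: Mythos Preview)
Your argument is correct and is essentially the classical proof of the strong circuit elimination property as found in Oxley's book; note, however, that the paper does not supply its own proof of this lemma but simply quotes it as \cite[Proposition~1.4.12]{Oxl11}, so there is nothing to compare against beyond the cited source, with which your approach agrees.
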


From this property, it follows that the relation
\begin{displaymath}
	e \sim f :\iff e = f \text{ or } \M \text{ has a circuit containing $e$ and } f
\end{displaymath}
is an equivalence relation \cite[Proposition~4.1.2]{Oxl11}.
The equivalence classes with respect to $\sim$ are called the \emph{components} of $\M$.

We shall need to regard restrictions of matroids throughout this paper.
To this end, for a subset $E'\subseteq E$, we denote the matroid $(E',\F')$ where $\F'\coloneqq\left\{F\in\F\colon\, F\subseteq E' \right\}$ by $\M|E'$.
For $e\in E$, we write $\M_e$ for $\M|(E-e)$.

We assume that each element $e\in E$ has a parametric weight $w(e,\lambda)\coloneqq a_e + \lambda b_e$ depending linearly on a parameter $\lambda \in I\subseteq \mathbb{R}$, where $I$ is an interval and $a_e,b_e \in \mathbb{Q}$.
The weight of a basis $B$ is defined as $w(B,\lambda)\coloneqq\sum_{e\in B} w(e,\lambda)$.

For the rest of the paper, we fix the following notation.
\begin{notation}
	Let $\M = (E,\F)$ be a matroid with parametric weights $w(e,\lambda)$ and rank $k$.
\end{notation}
We recall that $\M$ is a graphic matroid if $E$ is the edge set of a graph $G$ and $\F$ contains the sets of edges $F$ such that the subgraph $G[F]$ induced by the edges in $F$ is acyclic.
We also note that the components of a graphic matroid are exactly the 2-connected components of the corresponding graph, cf.\ \cite[Proposition~4.1.7]{Oxl11}.

\textbf{The parametric matroid problem.} 
The goal of the parametric matroid problem is to compute, for each parameter value $\lambda \in I$, a minimum weight basis $B_\lambda^*$ with respect to the weights $w(e,\lambda)$.
The function $w\colon I \to \mathbb{R}, \lambda \mapsto w(B_\lambda^*,\lambda)$ mapping the parameter to the weight of a minimum weight basis is called optimal value function (of the parametric matroid problem). 
\cite{Gusfield} showed that $w$ is piecewise linear and concave. 
The points $\lambda\in I$ where the slope of $w$ changes are called breakpoints. 
It is well-known that a breakpoint can only occur at a so-called equality point, which is a point at which the weights of two elements of $\M$ become equal. 
By $\lambda(e,f)$ we denote the equality point where $w(e,\lambda) = w(f,\lambda)$.
If this point does not exist, then we set $\lambda(e,f)$ to $-\infty$.
Moreover, we write $\lambda(e\to f)$ for $\lambda(e,f)$ if $w(e,\lambda) < w(f,\lambda)$ for $\lambda < \lambda(e,f)$ (and thus $w(e,\lambda) > w(f,\lambda)$ for $\lambda > \lambda(e,f)$), meaning that $e$ was the element of lower weight before the equality point, and $f$ is the lighter one after.

There are at most $\binom{m}{2}\in\mathcal{O}(m^2)$ many equality points and $\Theta(\m k^{\frac{1}{3}})$ many breakpoints, cf.\ \cite{dey1998improved,eppstein1995geometric}.
A simple algorithm for the parametric matroid problem is to traverse the sorted list of equality points and check by an independence test whether an equality point is indeed a breakpoint. 
The running time is $\mathcal{O}(\m^2f(\m))$ where $f(\m)$ is the time needed to perform a single independence test.

\textbf{Parametric matroid interdiction.}
We now define a most vital element and use it to define the optimal interdiction value function.
\begin{definition}[Most vital element] \label{definition mve}
	Let $\lambda\in I$.
	For an element $e \in E$, we denote by $B_\lambda^e$ a minimum weight basis of $\M_e$.
	If $\M_e$ does not have a basis of rank $k$, we set $w(B_\lambda^e,\lambda)=\infty$ for all $\lambda\in I$.
	An element $e^*\in E$ is called \emph{most vital element} at $\lambda$ if $w(B_\lambda^{e^*},\lambda) \geq w(B_\lambda^e,\lambda)$ for all $e\in E$.
\end{definition}
\begin{definition}[Optimal interdiction value function] \label{definition opt interdiction value function}
	For $e\in E$, we define the function $y_e$ by $y_e\colon I\to\mathbb{R}$, $\lambda\mapsto w(B_\lambda^e,\lambda)$ mapping the parameter $\lambda$ to the weight of a minimum weight basis of $\M_e$ at $\lambda$. 
	We define $y(\lambda)\coloneqq\max\{y_e(\lambda)\colon\, e\in E\}$
	as the weight of an optimal one-interdicted matroid at $\lambda$.
	The function $y\colon I\to\mathbb{R}$, $\lambda\mapsto y(\lambda)$ is called the \emph{optimal interdiction value function}.
\end{definition}

With this notation, we can formulate the parametric matroid one-interdiction problem as follows.
\begin{problem}[Parametric matroid one-interdiction problem]\label{problem matroid}
	Given a matroid~$\M$ with parametric weights $w(e,\lambda)$ and the parameter interval~$I$, compute, for every $\lambda\in I$, a most vital element $e^*$ and the corresponding objective function value $y(\lambda) = y_{e^*}(\lambda)$.
\end{problem}
Our goal is to solve this problem efficiently, meaning we need to find the intervals on which $y$ is linear and the most vital elements do not change effectively.

In order to avoid certain special cases, we make the following assumptions, which are essentially without loss of generality.
\begin{assumption}\label{assumptions} In this paper, we make the following assumptions.
	\begin{enumerate}
		\item There exists a unique minimum weight basis $B_\lambda^*$ for all $\lambda\in I$.
		\item There exists a basis $B_\lambda^e$ with rank $k$ for every $e\in E$ and $\lambda\in I$.
		\item No two pairs of weights $w(e,\lambda)$ become equal simultaneously.
	\end{enumerate}
\end{assumption}
To see that these are, indeed, without loss of generality, note that ties can be solved by a fixed ordering of the elements in $E$ yielding a unique minimum weight basis.
If $\M_e$ does not contain a basis of rank $k$ for some $e\in E$, the problem can be trivially solved by always interdicting $e$, yielding a value of infinity at each point $\lambda\in I$.
An infinitesimally change in a weight function $w(e,\lambda)$ causes a minimal perturbation in the intercept with another function, and does not significantly change the optimal value function or any of the functions $y_e$, cf.\ \cite{ParaMST} for parametric minimum spanning trees.
Since $y$ is given by the upper envelope of the functions $y_e$, also $y$ is not changed significantly.

We note that, throughout the paper, we often treat the case that $I=\mathbb R$, for example when determining upper bounds on the number of breakpoints.
This, too, is without loss of generality, since smaller intervals have fewer breakpoints and the worst case is obtained for all of $\mathbb R$.

Since functions $y_e$ are piecewise linear and continuous, the same holds true for $y$.
\begin{observation}
	The optimal interdiction value function $y$, which is the upper envelope of the functions $y_e$, is piecewise linear and continuous. 
\end{observation}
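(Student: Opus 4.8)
The plan is to reduce the statement to two facts: first, that each individual function $y_e$ is piecewise linear and continuous, and second, that the upper envelope of finitely many such functions inherits both properties.

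First I would observe that, for a fixed element $e\in E$, the function $y_e$ is nothing but the optimal value function of the parametric matroid problem on the restricted matroid $\M_e=\M|(E-e)$, equipped with the same parametric weights $w(\cdot,\lambda)=a_\cdot+\lambda b_\cdot$. By the second part of \cref{assumptions}, $\M_e$ has a basis of rank $k$ for every $\lambda\in I$, so $y_e$ is finite and well defined. Writing $y_e(\lambda)=\min\{w(B,\lambda)\colon B\text{ a basis of }\M_e\}$, each basis weight $w(B,\lambda)=\sum_{f\in B}(a_f+\lambda b_f)$ is an affine function of $\lambda$, and $\M_e$ has only finitely many bases since $E$ is finite. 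Hence $y_e$ is a pointwise minimum of finitely many affine functions, which is piecewise linear, concave, and therefore continuous on $I$; this is exactly the structure of the parametric matroid optimal value function established by \cite{Gusfield}.

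It then remains to pass from the $y_e$ to their upper envelope $y=\max_{e\in E}y_e$. Continuity is immediate, as a pointwise maximum of finitely many continuous functions on $I$ is continuous. For piecewise linearity I would argue that each $y_e$ has only finitely many breakpoints and that any two functions $y_e,y_{e'}$, being piecewise linear, coincide or cross in only finitely many points of $I$; collecting all breakpoints of the individual $y_e$ together with all such pairwise crossing points yields a finite subset of $I$ that partitions the interval into subintervals on which $y$ agrees with a single affine function, namely the one attaining the maximum there. Thus $y$ is piecewise linear.

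I do not expect a genuine obstacle here, since the result is essentially a structural consequence of combining \cite{Gusfield} with the elementary stability of piecewise linearity and continuity under finite maxima. The only points requiring care are verifying that each $y_e$ is well defined and finite (guaranteed by the rank-$k$ condition on $\M_e$ in \cref{assumptions}) and that the index set $E$ over which the maximum is taken is finite (guaranteed by $|E|=\m$), both of which are already available.
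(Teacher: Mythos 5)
Your proposal is correct and takes essentially the same route as the paper, which states this as an unproved observation justified by the single remark that each $y_e$ is piecewise linear and continuous and that these properties carry over to the upper envelope. You simply fill in the details of exactly that argument: each $y_e$ is the optimal value function of the parametric matroid problem on $\M_e$ (a pointwise minimum of finitely many affine functions, well defined by the rank assumption), and finite upper envelopes preserve piecewise linearity and continuity.
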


Since we are interested in determining the number of linear pieces of $y$ and the points where the slopes change, the following definition is helpful.
\begin{definition}
	\label{def:change-break-interdiction-points}
	The points of slope change of $y$ are called \emph{changepoints}, which are partitioned into \emph{breakpoints} and \emph{interdiction points}. 
	A breakpoint $\lambda$ of $y$ arises from a breakpoint of a function $y_{e^*}$, where $e^*$ is a most vital element before and after $\lambda$. 
	An interdiction point $\lambda$ of $y$ occurs if the most vital element changes at $\lambda$, that is, if a function $y_{e^*}$ intersects a function $y_{f^*}$, where $e^*$ and $f^*$ are most vital elements before and after $\lambda$, respectively.
\end{definition}

\textbf{Minimum spanning tree interdiction.}
We end the preliminaries with some notes on a special case of the matroid interdiction problem.
If we assume that $\M$ is a graphic matroid, we obtain the minimum spanning tree interdiction problem, which we now introduce. 
To this end, let $G$ be an undirected and connected graph with node set $V$ and edge set $E$ with $\left| V \right| \coloneqq n \in \mathbb{N}$ and $\left| E \right| \coloneqq m \in \mathbb{N}$. 
Each edge $e\in E$ has a parametric weight $w(e,\lambda)\coloneqq a_e + \lambda b_e$ where $\lambda \in I$.
The rank $k$ corresponds to the number of edges of a spanning tree of $G$, which is $n-1$.
Let $\lambda\in I$.
The minimum weight basis $B_\lambda^*$ corresponds to the minimum spanning tree $T_\lambda^*$ of $G$ at $\lambda$ and $B_\lambda^e$ to the minimum spanning tree $T_\lambda^e$ of $G-e = (V,E-e)$ at $\lambda$.
Here, a most vital element $e^*\in E$ as defined in \cref{definition mve} is called most vital edge.
In complete analogy, we obtain the functions $y_e$ with $y_e(\lambda) = w(T_\lambda^e,\lambda)$ and the optimal interdiction value function $y$ as their upper envelope.
The resulting problem can be stated as follows.
\begin{problem}[Parametric minimum spanning tree one-interdiction problem]\label{problem mst}
	Given a graph $G$ with parametric weights $w(e,\lambda)$ and the parameter interval $I$, compute, for any $\lambda\in I$, a most vital edge and the corresponding objective function value $y(\lambda)$.
\end{problem}

\section{Structural properties} \label{sec:structural-properties}

In this section, we derive bounds on the number of changepoints of the optimal interdiction value function $y$.
A first upper bound follows directly from the theory of Davenport–Schinzel Sequences, cf.\ \cite{AGARWAL20001}.
\begin{theorem}\label{theorem upper bound 1}
	The optimal interdiction value function $y$ of the parametric matroid one-interdiction problem has at most $\mathcal{O}(\m^2 k^{\frac{1}{3}}\alpha(\m))$ many changepoints.
\end{theorem}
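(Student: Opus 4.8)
The plan is to bound the number of changepoints of $y$ by recognising $y$ as the upper envelope of the $m$ functions $y_e$ and invoking the theory of Davenport–Schinzel sequences. The key quantities to establish are (i) how many functions we are taking the envelope of, and (ii) how many times any two of these functions can cross. Once these two numbers are known, the standard Davenport–Schinzel bound on the complexity of an upper envelope yields the claim.

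First I would count the pieces of each individual function $y_e$. For fixed $e$, the function $y_e(\lambda) = w(B_\lambda^e,\lambda)$ is exactly the optimal value function of the parametric matroid problem on the restricted matroid $\M_e$. By the result of \cite{Gusfield} quoted in the preliminaries, this is piecewise linear and concave, and by \cite{dey1998improved,eppstein1995geometric} it has $\mathcal{O}(m k^{\frac13})$ breakpoints, hence $\mathcal{O}(m k^{\frac13})$ linear segments. Summing over all $m$ choices of $e$, the total number of line segments among all the $y_e$ is $\mathcal{O}(m^2 k^{\frac13})$.

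Next I would bound the number of intersections between two functions $y_e$ and $y_f$. Since each is piecewise linear and concave, the difference $y_e - y_f$ is piecewise linear, and two concave piecewise linear functions can cross at most a bounded number of times per pair of segments; more usefully, the upper envelope of a collection of $s$ line segments in the plane has complexity $\mathcal{O}(s\,\alpha(s))$, which is precisely the Davenport–Schinzel bound of order $3$ for segments. I would therefore feed the full collection of $\mathcal{O}(m^2 k^{\frac13})$ segments (the segments of all the $y_e$) into this theorem. With $s = \mathcal{O}(m^2 k^{\frac13})$, the envelope $y$ has at most $\mathcal{O}\!\left(s\,\alpha(s)\right) = \mathcal{O}\!\left(m^2 k^{\frac13}\,\alpha(m^2 k^{\frac13})\right)$ breakpoints, and since $\alpha(m^2 k^{\frac13}) = \mathcal{O}(\alpha(m))$ because $\alpha$ grows slower than any iterated logarithm, this simplifies to the claimed $\mathcal{O}(m^2 k^{\frac13}\,\alpha(m))$.

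The main obstacle is making the reduction to segments clean: the Davenport–Schinzel theorem for the upper envelope of $n$ segments gives complexity $\lambda_3(n) = \Theta(n\,\alpha(n))$, but one must be careful that the relevant count is the total number of segments rather than the number of functions, and that concavity of each $y_e$ does not let us do better than the generic segment bound here (it only guarantees $y_e$ itself contributes segments, not that pairwise crossings are fewer). I would also double-check the absorption of $\alpha(m^2 k^{\frac13})$ into $\alpha(m)$; since $k \le m$ we have $m^2 k^{\frac13} \le m^{7/3}$, and $\alpha$ of any fixed polynomial in $m$ is $\Theta(\alpha(m))$, so this step is routine but worth stating explicitly.
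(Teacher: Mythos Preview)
Your proposal is correct and follows essentially the same route as the paper: count the linear pieces of all $y_e$ (each is the optimal value function of the parametric matroid problem on $\M_e$ with $\mathcal{O}(mk^{1/3})$ breakpoints by \cite{dey1998improved}), giving $t\in\mathcal{O}(m^2k^{1/3})$ segments in total, and then apply the Davenport--Schinzel bound for the upper envelope. The only cosmetic difference is that the paper invokes the variant for $m$ \emph{continuous} piecewise linear functions with $t$ total pieces, which directly yields $\mathcal{O}(t\,\alpha(m))$, whereas you use the raw segment bound $\mathcal{O}(s\,\alpha(s))$ and then absorb $\alpha(m^2k^{1/3})$ into $\alpha(m)$; both are fine.
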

\begin{proof}
	\cite[Corollary~2.18]{AGARWAL20001} showed that the upper envelope of $\m$ piecewise linear, real-valued, and continuous functions has at most $\mathcal{O}(t\alpha(\m))$ many changepoints where $t$ is the total number of linear pieces in the graphs of these functions.
	For our problem, $t$ is in $\mathcal{O}(\m^2 k^{\frac{1}{3}})$ since each of the $\m$ functions $y_e$ has at most $\mathcal{O}(\m k^{\frac{1}{3}})$ many breakpoints, cf.\ \cite{dey1998improved}.
\end{proof}

Aided by the following lemma, we will be able to reduce the number of functions $y_e$ needed for the computation of $y$ from $m$ to $k$.
\begin{lemma}\label{lemma only elements of B*}
	For any $\lambda\in I$, a most vital element is an element of $B_\lambda^*$.
\end{lemma}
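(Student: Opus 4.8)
The plan is to compare, at a fixed $\lambda\in I$, the interdiction values $y_e(\lambda)=w(B_\lambda^e,\lambda)$ for elements inside and outside the unique minimum weight basis $B_\lambda^*$. The guiding intuition is that removing an element \emph{not} in $B_\lambda^*$ leaves the optimal basis untouched, whereas removing an element of $B_\lambda^*$ strictly increases the weight; hence the upper envelope defining the most vital element is attained inside $B_\lambda^*$.

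First I would treat the case $e\notin B_\lambda^*$. Since $B_\lambda^*$ is independent, has rank $k$, and avoids $e$, it is itself a basis of $\M_e$, so the minimum weight basis of $\M_e$ has weight at most $w(B_\lambda^*,\lambda)$. Conversely, by the full-rank guarantee in \cref{assumptions} every basis of $\M_e$ has rank $k$ and is therefore a basis of $\M$; as $B_\lambda^*$ is a global minimum weight basis of $\M$, each such basis has weight at least $w(B_\lambda^*,\lambda)$. Combining the two inequalities and invoking the uniqueness from \cref{assumptions} yields $B_\lambda^e=B_\lambda^*$ and thus $y_e(\lambda)=w(B_\lambda^*,\lambda)$.

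Next I would handle $e\in B_\lambda^*$. Here $B_\lambda^e$ is a basis of $\M_e$ and hence, again by the full-rank guarantee, a basis of $\M$ that does not contain $e$; in particular $B_\lambda^e\neq B_\lambda^*$. Because $B_\lambda^*$ is the \emph{unique} minimum weight basis, this forces the strict inequality $y_e(\lambda)=w(B_\lambda^e,\lambda)>w(B_\lambda^*,\lambda)$.

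Putting the two cases together shows that every element of $B_\lambda^*$ has strictly larger interdiction value than every element outside it. Since $k\geq 1$ ensures $B_\lambda^*$ is nonempty (the case $k=0$ being trivial), the maximum defining the most vital element is attained inside $B_\lambda^*$, which is exactly the claim. I do not anticipate a genuine obstacle; the only points requiring care are the use of the full-rank guarantee in \cref{assumptions} so that bases of $\M_e$ are bona fide bases of $\M$ (allowing the global minimality of $B_\lambda^*$ to transfer), together with the uniqueness assumption to upgrade the inequality for $e\in B_\lambda^*$ from weak to strict, thereby separating the two regimes.
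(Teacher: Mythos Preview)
Your proof is correct and follows essentially the same two-case argument as the paper: for $e\notin B_\lambda^*$ you show $B_\lambda^e=B_\lambda^*$, and for $e\in B_\lambda^*$ you use uniqueness of $B_\lambda^*$ to obtain a strictly larger interdiction value. Your write-up is somewhat more explicit about invoking the full-rank assumption and the uniqueness assumption from \cref{assumptions}, but the underlying idea is identical to the paper's.
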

\begin{proof}
	Let $e\notin B_\lambda^*$, then $B_\lambda^*$ is a basis of $\M_{e}$, giving us that $B_\lambda^e = B_\lambda^*$.
	For every $e\in B_\lambda^*$, we get that $B_\lambda^e \neq B_\lambda^*$ and $w(B_\lambda^e,\lambda)\geq w(B_\lambda^*,\lambda)$.
	In case of equality, by our uniqueness assumption, $B_\lambda^*$ wins ties.
	Therefore, no element in $E\setminus B_\lambda^*$ can be most vital.
\end{proof}

The following definition and lemma generalize the definition of replacement edges for graphic matroids from \cite{HSU} to general matroids with parametric weights.
\begin{definition}[Replacement element]
	Let $\lambda \in I$. 
	For $e\in B_\lambda^*$, we define $R_\lambda(e)$ as the set of all replacement candidates of $e$ in the minimum weight basis at~$\lambda$, i.~e.\ $R_\lambda(e)\coloneqq\{r\in E \setminus B_\lambda^* \;\colon\; B_\lambda^*-e+r \in \F\}$. 
	We define the replacement element of $e$ at $\lambda$ as $r_\lambda(e) = \arg\min\Set{w(r,\lambda)\colon\, r\in R_\lambda(e)}$.  
\end{definition}
Note that by sorting the elements in a predetermined way, we can assume without loss of generality that $r_\lambda(e)$ is unique.
\begin{lemma} \label{lemma replacement element}
	Let $\lambda \in I$ and $e\in B_\lambda^*$. 
	Then, $B_\lambda^e = B_\lambda^* -e + r_\lambda(e)$.
\end{lemma}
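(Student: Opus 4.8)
The plan is to verify directly that the explicit set $B_0 \coloneqq B_\lambda^* - e + r_\lambda(e)$ is \emph{the} minimum weight basis of $\M_e$; since that minimum weight basis is unique (by the same tie-breaking convention as in \cref{assumptions}), this identifies it with $B_\lambda^e$. First I would check that $B_0$ is a basis of $\M_e$ at all. The set $R_\lambda(e)$ is nonempty: by \cref{assumptions} the matroid $\M_e$ has a basis $A$ of rank $k$, and applying the augmentation axiom to $A$ and the independent set $B_\lambda^* - e$ (of size $k-1$) yields an element $a \in A \setminus (B_\lambda^* - e)$ with $B_\lambda^* - e + a \in \F$; since $a \in E - e$ and $a \notin B_\lambda^* - e$, we get $a \in E \setminus B_\lambda^*$, so $a \in R_\lambda(e)$. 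Hence $r_\lambda(e)$ is well defined, and $B_0 \in \F$ is independent, avoids $e$, and has cardinality $k$, so it is a basis of $\M_e$.

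The heart of the argument is to show $w(B_0,\lambda) \leq w(B,\lambda)$ for \emph{every} basis $B$ of $\M_e$. Since $e \in B_\lambda^*$ while $e \notin B$ (as $B \subseteq E - e$), the symmetric exchange property of matroids, applied to the two bases $B_\lambda^*$ and $B$ of $\M$ and the element $e \in B_\lambda^* \setminus B$, provides an element $r \in B \setminus B_\lambda^*$ such that both $B_\lambda^* - e + r$ and $B - r + e$ are bases of $\M$. The first property shows $r \in R_\lambda(e)$, so $w(r,\lambda) \geq w(r_\lambda(e),\lambda)$ by the choice of the replacement element. The second property, together with the minimality of $B_\lambda^*$ in $\M$, gives
\begin{displaymath}
	w(B,\lambda) - w(r,\lambda) + w(e,\lambda) = w(B - r + e,\lambda) \geq w(B_\lambda^*,\lambda).
\end{displaymath}
Rearranging and substituting $w(B_0,\lambda) = w(B_\lambda^*,\lambda) - w(e,\lambda) + w(r_\lambda(e),\lambda)$ yields
\begin{displaymath}
	w(B,\lambda) - w(B_0,\lambda) \geq w(r,\lambda) - w(r_\lambda(e),\lambda) \geq 0,
\end{displaymath}
as desired. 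As $B$ was an arbitrary basis of $\M_e$ and $B_0$ is itself one, $B_0$ is a minimum weight basis, and uniqueness gives $B_\lambda^e = B_0 = B_\lambda^* - e + r_\lambda(e)$.

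I expect the middle step to be the main obstacle: one must connect an arbitrary basis $B$ of $\M_e$ back to $B_\lambda^*$ while simultaneously controlling both the element that leaves and the element that enters. The symmetric (simultaneous) exchange property is exactly what makes this work, since it delivers a single $r$ that is at once a legal replacement for $e$ (placing it in $R_\lambda(e)$, hence no cheaper than $r_\lambda(e)$) and a legal swap turning $B$ into a basis of $\M$ (so that the global optimality of $B_\lambda^*$ can be invoked). An alternative route avoids symmetric exchange and instead checks that $B_0$ admits no improving single-element swap, using \cref{strong-circuit-elimination-axiom} to locate the fundamental circuit of any candidate element with respect to $B_0$ inside the union of its fundamental circuits with respect to $B_\lambda^*$; I would keep the exchange argument as the primary proof since it is shorter and more transparent.
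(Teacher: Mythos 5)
Your proof is correct, but it takes a genuinely different route from the paper. The paper proves the lemma by running the greedy algorithm on $\M$ and $\M_e$ in parallel (with the same weight-sorted element order) and showing, by induction over the iterations, that the two computed independent sets agree up to the single swap of $e$ for $r_\lambda(e)$; this keeps the argument inside the greedy machinery the paper reuses in later lemmas, and it makes the identification with $B_\lambda^e$ literal, since the greedy algorithm with fixed tie-breaking is what produces both bases. You instead argue by direct comparison: $B_0 \coloneqq B_\lambda^* - e + r_\lambda(e)$ is a basis of $\M_e$, and for an arbitrary basis $B$ of $\M_e$ the \emph{symmetric} exchange property applied to $B_\lambda^*$, $B$, and $e \in B_\lambda^*\setminus B$ yields one element $r$ that simultaneously certifies $r \in R_\lambda(e)$ (so $w(r,\lambda)\geq w(r_\lambda(e),\lambda)$) and lets you invoke global optimality of $B_\lambda^*$ via $w(B-r+e,\lambda)\geq w(B_\lambda^*,\lambda)$, giving $w(B,\lambda)\geq w(B_0,\lambda)$. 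Two remarks. First, the symmetric exchange property is a theorem of matroid theory (due to Brualdi; it also appears in Oxley's book), not one of the axioms listed in the paper's preliminaries, and your argument genuinely needs it — the plain exchange/augmentation axiom only delivers one of the two swapped bases — so you should cite it explicitly rather than treat it as folklore. Second, your final step identifies $B_0$ with $B_\lambda^e$ via uniqueness of the minimum weight basis of $\M_e$, whereas the paper's uniqueness assumption is stated only for $B_\lambda^*$; this is harmless because the fixed-ordering tie-breaking discussed after the assumptions applies verbatim to $\M_e$, but it deserves a sentence. On the plus side, your verification that $R_\lambda(e)\neq\emptyset$ (via the rank-$k$ assumption on $\M_e$ and augmentation) is more careful than the paper, which takes the existence of $r_\lambda(e)$ for granted. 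In short, your approach buys brevity and conceptual transparency at the cost of importing a nontrivial external result; the paper's approach is longer but self-contained and aligned with the greedy-based reasoning used elsewhere in the article.
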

\begin{proof}
	Let $\lambda\in I$. 
	Let $w(e_1,\lambda)~\leq~\dotsc~\leq~w(e_m,\lambda)$ and $e_i=e$.
	We apply the greedy algorithm on $\M$ and on $\M_e$.
	Without loss of generality, we can assume that the greedy algorithm considers the elements in $E$ and $E-e$ in the same order and computes $B_\lambda^*$ in $\M$.
	Let $F_l$ and $F_l'$ be the independent sets obtained after iteration~$l$ on $E$ and $E-e$, respectively.
	(To keep the iteration indices consistent, we assume the greedy algorithm does nothing in iteration $i$ on $\M_e$.)
	The algorithm initializes $F_0 = F_0' =\emptyset$ with the empty set and tests in iteration~$l$ if $F_{l-1} + e_l \in \F$ and $F_{l-1}' +e_l \in \F$.
	Let $e_j=r_\lambda(e)$.
	Then, $F_l = F_l'$ for $l<i$ and $F_i' = F_i -e_i$.
	
	Inductively, we now show that the equality $F_l' = F_l -e_i$, which holds at $l=i$, remains true for $i<l<j$.
	To this end, we assume that the equality holds for $l-1$ and show that $F_{l-1} + e_l \in \F$ if and only if $F_{l-1}' +e_l\in\F$:
	if $F_{l-1} + e_l \in \F$, then also $F_{l-1}' +e_l \subseteq F_{l-1} + e_l \in \F$. 
	Let $F_{l-1}' +e_l \in \F$ and suppose that $F_{l-1} +e_l \notin \F$. 
	Complete the set $F_{l-1}' +e_l$ with elements of $B_\lambda^*$ to a basis $\hat{B} \subseteq B_\lambda^* - e_i + e_l \in \F$. 
	Hence, it follows that $\hat{B} = B_\lambda^* - e_i + e_l \in \F$ and $e_l \in R_\lambda(e_i)$.
	But since $w(e_l,\lambda) < w(e_j,\lambda)$, this is a contradiction.
	
	Then, in iteration $l=j$, it follows that $F_{j-1}+e_j \notin\F$ as $e_j \notin B_\lambda^*$ and $F_{j-1}'+e_j = F_{j-1} - e_i +e_j \subseteq B_\lambda^* - e_i +e_j \in\F$ as $e_j \in R_\lambda(e)$.
	Thus $F_j' = F_j - e_i + e_j$.
	
	For $l>j$ we show, inductively, that the equality $F_l' = F_l -e_i + e_j$, which holds at $l=j$, remains true Let $l>j$.
	Again, we assume it holds for $l-1$ and show that $F_{l-1} + e_l \in \F$ if and only if $F_{l-1}' +e_l\in\F$.
	If $F_{l-1}+e_l \in\F$, then $\lvert F_{l-1}'\rvert = \lvert F_{l-1}+e_l\rvert -1$ and hence  there exists a $f\in F_{l-1}+e_l = F_{l-1}' -e_i +e_j +e_l$ such that $F_{l-1}' +f\in\F$. Since $F_{i-1}'+e_i\notin\F$, it follows that $e_l\neq e_i$ and hence $f=e_l$. 
	If $F_{l-1}'+e_l \in\F$, it follows analogously that $F_{l-1}+e_l \in\F$.
	This shows that $B_\lambda^e = F_k' = F_k - e_i + e_j = B_\lambda^* - e_i + e_j$.
\end{proof}

Note that, since the objective function is linear between equality points, the optimal bases are unaffected in these intervals.
\begin{observation}\label{observation replacement element}
	Between two consecutive equality points, all replacement candidates and all replacement elements remain unchanged. 
\end{observation}

\begin{lemma}\label{lemma r(e_j) = e}
	Let $\lambda^*=\lambda(e\to f)$ be a breakpoint of $w$ and $\lambda_l$ and $\lambda_r$ be the next smaller and larger equality point of $\lambda^*$, respectively. Let $B$ be the minimum weight basis for $\lambda\in(\lambda_l,\lambda^*]$ and $B'=B-e+f$ be the minimum weight basis for $\lambda\in(\lambda^* ,\lambda_r]$.
	Then, for $\lambda \in (\lambda_l ,\lambda^*]$, it holds that $r_\lambda(e) = f$ and for $\lambda \in (\lambda^* ,\lambda_r]$ it holds that $r_\lambda(f) = e$.
\end{lemma}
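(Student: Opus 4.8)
The plan is to prove both statements by running one and the same argument, once to the left of $\lambda^*$ (for $r_\lambda(e)=f$) and once to its right (for $r_\lambda(f)=e$), with the driving force being the genericity Assumption~\ref{assumptions}(3) combined with \cref{observation replacement element} and the uniqueness of the minimum weight basis. Throughout I would use the fact that membership in $\F$ is independent of $\lambda$, so that any set once shown to lie in $\F$ is a basis for every parameter value.

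For the first claim, fix $\lambda\in(\lambda_l,\lambda^*)$ and set $g\coloneqq r_\lambda(e)$; by \cref{observation replacement element} this element is the same for all $\lambda$ in the interval, and by \cref{lemma replacement element} it describes the best basis avoiding $e$. Since $B'=B-e+f\in\F$ we have $f\in R_\lambda(e)$, so $w(g,\lambda)\le w(f,\lambda)$ by minimality, and $g\neq e$ because $g\notin B$ while $e\in B$. The plan is then to assume $g\neq f$ and reach a contradiction. As $(\lambda_l,\lambda^*)$ contains no equality point, the lines $w(g,\cdot)$ and $w(f,\cdot)$ cannot cross there, so in fact $w(g,\lambda)<w(f,\lambda)$ on the whole interval. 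On the other side, $B-e+g\in\F$ is a basis of $\M$ distinct from the unique minimum weight basis $B'$ on $(\lambda^*,\lambda_r)$, whence $w(B',\lambda)<w(B-e+g,\lambda)$, i.e.\ $w(f,\lambda)<w(g,\lambda)$ there. Since $w(g,\cdot)-w(f,\cdot)$ is affine and changes sign across $\lambda^*$, it vanishes at $\lambda^*$, so $\lambda^*=\lambda(g,f)$; as $g\notin\{e,f\}$ this produces a second pair of weights coinciding at $\lambda^*$, contradicting Assumption~\ref{assumptions}(3). Hence $g=f$.

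The second claim is the mirror image: for $\lambda\in(\lambda^*,\lambda_r)$ put $h\coloneqq r_\lambda(f)$, note $e\in R_\lambda(f)$ since $B'-f+e=B\in\F$, and observe $h\neq f$. Assuming $h\neq e$, the same reasoning yields $w(h,\lambda)<w(e,\lambda)$ to the right of $\lambda^*$, while comparing the basis $B'-f+h$ against the unique minimum weight basis $B$ to the left of $\lambda^*$ gives $w(e,\lambda)<w(h,\lambda)$ there; again $w(e,\cdot)$ and $w(h,\cdot)$ would then coincide at $\lambda^*$ with $h\notin\{e,f\}$, contradicting Assumption~\ref{assumptions}(3), so $h=e$.

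Finally, I would extend both statements from the open intervals to the half-open intervals claimed. At $\lambda^*$ this is immediate: for any competing $r\in R_{\lambda^*}(e)$ with $r\neq f$, the strict inequality $w(f,\lambda)<w(r,\lambda)$ on $(\lambda_l,\lambda^*)$ passes to $w(f,\lambda^*)\le w(r,\lambda^*)$, and equality would force $\lambda^*=\lambda(f,r)$ with $r\neq e$, again excluded by Assumption~\ref{assumptions}(3); so $f$ stays the strict minimiser. I expect the only genuinely delicate endpoint to be $\lambda_r$ in the second statement, because the pair of weights that coincide at $\lambda_r$ may itself involve $e$, creating a tie within $R_{\lambda_r}(f)$; this I would resolve via the fixed tie-breaking order, chosen consistently with the left limit, so that $r_{\lambda_r}(f)=e$. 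The heart of the proof is the clean contradiction with Assumption~\ref{assumptions}(3), obtained by playing the parameter-independent basis $B-e+g$ (respectively $B'-f+h$) off against the minimum weight bases on the two sides of $\lambda^*$; the tie-handling at $\lambda_r$ is the only point requiring care.
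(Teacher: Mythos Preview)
Your argument is correct and arrives at the same conclusion as the paper, but the mechanism of contradiction is genuinely different. The paper proves only one direction explicitly (say $r_\lambda(f)=e$ on $(\lambda^*,\lambda_r]$), assumes a competitor $g\in R_\lambda(f)$ with $w(g,\lambda)<w(e,\lambda)$, extends this inequality to all of $(\lambda_l,\lambda_r]$ via the absence of other equality points, and then runs the greedy algorithm on the \emph{left} side: there $w(g,\lambda)<w(e,\lambda)<w(f,\lambda)$, so greedy encounters $g$ before $e$ and $f$, rejects it (since $g\notin B$), yet the partial set $B_p$ satisfies $B_p+g\subseteq B'-f+g\in\F$, a contradiction. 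You instead carry the basis $B-e+g$ (respectively $B'-f+h$) to the \emph{other} side of $\lambda^*$ and compare it directly against the unique minimum weight basis there, forcing the sign of $w(g,\cdot)-w(f,\cdot)$ to flip across $\lambda^*$ and hence producing a second equality pair at $\lambda^*$, which Assumption~\ref{assumptions}(c) forbids. Your route is arguably cleaner: it uses only the uniqueness assumption and elementary basis comparison, without tracking greedy's intermediate states. The paper's route, on the other hand, stays closer to the algorithmic viewpoint used elsewhere (e.g.\ \cref{lemma replacement element}) and makes the role of greedy explicit. Your endpoint discussion at $\lambda_r$ is appropriately cautious; the paper's proof simply asserts the strict inequality on $(\lambda_l,\lambda_r]$ and implicitly relies on the same tie-breaking convention.
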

\begin{proof}
	We show that $r_\lambda(f) = e$ for $\lambda \in (\lambda^* ,\lambda_r]$, the proof that $r_\lambda(e) = f$ for $\lambda \in (\lambda_l ,\lambda^*]$ works analogously.
	According to \cref{observation replacement element}, it suffices to consider a fixed $\lambda \in (\lambda^* ,\lambda_r]$.
	Note that $e \in R_\lambda(f)$, since $B'-f+e = B \in \F$.
	Suppose that there exists an element $g \in R_\lambda(f)$ and $w(g,\lambda) < w(e,\lambda)$. Then, it holds that $w(g,\lambda) < w(e,\lambda)$ for all $\lambda \in (\lambda_l,\lambda_r]$.
	Before testing the elements $e$ and $f$, the greedy algorithm tests the element $g$ in some iteration $p$ and rejects it, since $g$ is not an element of $B$ or of $B'$.
	Let $B_p$ be the independent set computed by the greedy algorithm before iteration $p$.
	Then, $B_p + g \notin \F$ which is a contradiction since $B_p + g \subseteq B'-f+g \in \F$.
\end{proof}
\begin{corollary}\label{corollary y_e(lambda^*)=y_f(lambda^*)}
	Let $\lambda^*=\lambda(e\to f)$ be a breakpoint of $w$ and $\lambda_l$ and $\lambda_r$ be the next smaller and larger equality point of $\lambda^*$, respectively. 
	Let $B$ be the minimum weight basis for $\lambda\in(\lambda_l,\lambda^*]$ and $B'=B-e+f$ be the minimum weight basis for $\lambda\in(\lambda^* ,\lambda_r]$.
	Then, it holds that $y_e(\lambda^*)=y_f(\lambda^*)$.    
\end{corollary}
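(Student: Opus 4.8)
The plan is to evaluate both $y_e(\lambda^*)$ and $y_f(\lambda^*)$ explicitly as weights of the two bases $B$ and $B'$, and then to conclude equality from the fact that $\lambda^*$ is precisely the equality point of $e$ and $f$. The starting observation is that $B$ is the minimum weight basis on the half-open interval $(\lambda_l,\lambda^*]$, which contains its right endpoint, so that $B_{\lambda^*}^* = B$. Everything else is read off from the replacement-element machinery already established.

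First I would compute $y_f(\lambda^*)$. Since $f \notin B = B_{\lambda^*}^*$, the basis $B$ is already a basis of $\M_f$, and by the argument in the proof of \cref{lemma only elements of B*} we get $B_{\lambda^*}^f = B_{\lambda^*}^* = B$, hence $y_f(\lambda^*) = w(B,\lambda^*)$. Next I would compute $y_e(\lambda^*)$. Because $e \in B_{\lambda^*}^*$, \cref{lemma replacement element} gives $B_{\lambda^*}^e = B - e + r_{\lambda^*}(e)$, and \cref{lemma r(e_j) = e} tells us that $r_\lambda(e) = f$ for every $\lambda \in (\lambda_l,\lambda^*]$, in particular at the endpoint $\lambda^*$. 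Therefore $B_{\lambda^*}^e = B - e + f = B'$ and $y_e(\lambda^*) = w(B',\lambda^*)$.

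It then remains to compare the two values. Expanding by linearity of the weight, $w(B',\lambda^*) = w(B,\lambda^*) - w(e,\lambda^*) + w(f,\lambda^*)$, and since $\lambda^* = \lambda(e,f)$ is the equality point we have $w(e,\lambda^*) = w(f,\lambda^*)$, so the two extra terms cancel and $w(B',\lambda^*) = w(B,\lambda^*)$. Combining the three computations yields $y_e(\lambda^*) = w(B',\lambda^*) = w(B,\lambda^*) = y_f(\lambda^*)$, as claimed.

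The one point requiring care, and the only place where a careless argument could break, is the tie-breaking at $\lambda^*$ itself: there $B$ and $B'$ have equal weight, so it is genuinely ambiguous which basis is \emph{the} unique minimum weight basis guaranteed by \cref{assumptions}. This is exactly why I rely on the convention built into the statement (that $B$ is the minimum weight basis on the closed-on-the-right interval $(\lambda_l,\lambda^*]$), which both fixes $B_{\lambda^*}^* = B$ and makes \cref{lemma r(e_j) = e} applicable at the endpoint $\lambda^*$. With that convention pinned down, the remaining steps are routine substitutions and no further subtlety arises.
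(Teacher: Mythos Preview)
Your proof is correct and follows essentially the same approach as the paper: both arguments use \cref{lemma replacement element} together with \cref{lemma r(e_j) = e} to express $y_e(\lambda^*)$ and $y_f(\lambda^*)$ in terms of $B$, $B'$, and the weights $w(e,\lambda^*)=w(f,\lambda^*)$. The only minor difference is that you compute $y_f(\lambda^*)$ directly from $f\notin B_{\lambda^*}^*=B$ (so $B_{\lambda^*}^f=B$), whereas the paper obtains it from the right-hand replacement formula $y_f(\lambda)=w(B',\lambda)-w(f,\lambda)+w(r_\lambda(f),\lambda)$ on $(\lambda^*,\lambda_r]$ combined with $r_\lambda(f)=e$; your route is slightly more direct but the substance is the same.
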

\begin{proof}
	For $\lambda \in (\lambda_l,\lambda^*]$, it holds that $y_e(\lambda) = w(B,\lambda) - w(e,\lambda) + w(r_\lambda(e),\lambda)$ and, for $\lambda \in (\lambda^* ,\lambda_r]$, it holds that $y_f(\lambda) = w(B',\lambda) - w(f,\lambda) + w(r_\lambda(f),\lambda)$ by \cref{lemma replacement element} and \cref{observation replacement element}.
	Furthermore, it holds that $w(e,\lambda^*)=w(f,\lambda^*)$ and $w(B,\lambda^*)=w(B',\lambda^*)$.
	Thus, the claim follows from \cref{lemma r(e_j) = e}.
\end{proof}

We can now reduce the number of functions for computing $y$ from $m$ to $k$. When an element $e$ leaves the optimal basis $B_\lambda^*$ and gets replaced by an element $f$ at a point $\lambda(e\to f)$, we no longer need to consider $y_e$ (until $e$ possibly re-enters $B_\lambda^*$). Instead, we can use the function $y_f$ after the point $\lambda(e\to f)$.
\begin{theorem}\label{theorem upper bound 2}
	The optimal interdiction value function $y$ of the parametric matroid one-interdiction problem has at most $\mathcal{O}(\m^2 k^{\frac{1}{3}}\alpha(k))$ many changepoints.
\end{theorem}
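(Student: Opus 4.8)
The plan is to rewrite the upper envelope defining $y$ as an envelope of only $k$ functions and then reapply the Davenport–Schinzel bound \cite[Corollary~2.18]{AGARWAL20001} with parameter $k$ in place of $m$. The starting point is \cref{lemma only elements of B*}: at every $\lambda$ the most vital element lies in $B_\lambda^*$, so $y(\lambda)=\max_{e\in B_\lambda^*}y_e(\lambda)$, and $B_\lambda^*$ has exactly $k$ elements. The idea is to organise the $k$ basis positions into $k$ \emph{tracks} and, on each track, glue together the relevant pieces of the functions $y_e$ into a single continuous, piecewise linear function $z_i$, so that $y=\max_{1\le i\le k}z_i$ is an upper envelope of only $k$ functions.

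First I would make the track construction precise. Starting from the smallest parameter value, I assign the $k$ elements of $B_\lambda^*$ to tracks $1,\dots,k$ arbitrarily. As $\lambda$ increases, $B_\lambda^*$ stays constant between consecutive breakpoints of $w$, and at each breakpoint $\lambda^*=\lambda(e\to f)$ a single swap occurs: $e$ leaves and $f$ enters. By \cref{assumptions}(c) only one pair of weights coincides at $\lambda^*$, so exactly one track is affected, and I let the track holding $e$ switch to $f$. Each track $i$ thereby determines a sequence of elements occupying disjoint subintervals that cover $I$, and I set $z_i(\lambda):=y_e(\lambda)$ where $e$ is the element on track $i$ at $\lambda$. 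Since the $k$ tracks partition $B_\lambda^*$ at every $\lambda$, we indeed obtain $y=\max_i z_i$.

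The decisive point is that each $z_i$ is continuous. Away from the transition points this is clear, since every $y_e$ is piecewise linear and continuous. At a transition $\lambda^*=\lambda(e\to f)$ the track switches from $y_e$ to $y_f$, and \cref{corollary y_e(lambda^*)=y_f(lambda^*)} gives exactly $y_e(\lambda^*)=y_f(\lambda^*)$, so the two pieces meet. Hence every $z_i$ is continuous and piecewise linear, which is what the envelope bound requires. I expect this continuity at the transitions to be the crux of the argument; the preceding results (\cref{lemma replacement element,lemma r(e_j) = e} and \cref{corollary y_e(lambda^*)=y_f(lambda^*)}) were set up precisely to supply it.

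It remains to bound the total number $t$ of linear pieces of $z_1,\dots,z_k$. The slope changes of each $z_i$ are of two kinds: breakpoints inherited from an underlying $y_e$, and transition points. Every $y_e$ is the optimal value function of the parametric matroid problem on $\M_e$ and hence has $\mathcal{O}(m k^{\frac{1}{3}})$ breakpoints by \cite{dey1998improved}; summing over all $m$ elements bounds the inherited slope changes by $\mathcal{O}(m^2 k^{\frac{1}{3}})$. There is exactly one transition per breakpoint of $w$, contributing only $\mathcal{O}(m k^{\frac{1}{3}})$ further slope changes, which is absorbed into the same bound. Thus $t\in\mathcal{O}(m^2 k^{\frac{1}{3}})$, and applying \cite[Corollary~2.18]{AGARWAL20001} to the upper envelope of the $k$ continuous, piecewise linear functions $z_1,\dots,z_k$ yields $\mathcal{O}(t\,\alpha(k))=\mathcal{O}(m^2 k^{\frac{1}{3}}\alpha(k))$ changepoints for $y$, as claimed.
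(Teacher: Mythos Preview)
Your proposal is correct and follows essentially the same approach as the paper: reduce to $k$ continuous, piecewise linear ``track'' functions by gluing $y_e$ and $y_f$ at each breakpoint $\lambda(e\to f)$ of $w$ using \cref{corollary y_e(lambda^*)=y_f(lambda^*)}, then apply the Davenport--Schinzel bound of \cite{AGARWAL20001} with $k$ functions and $t\in\mathcal{O}(m^2k^{1/3})$ total pieces. Your exposition is in fact slightly more explicit than the paper's about the track construction and the piece count, but the argument is the same.
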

\begin{proof}
	According to \cref{lemma only elements of B*}, before the first breakpoint of $w$, only the $k$ functions $y_e$ with $e\in B_\lambda^*$ are relevant for the computation of $y$. Now, let $\lambda^*=\lambda(e\to f)$ be a breakpoint of $w$ where element $e$ leaves $B_\lambda^*$ and gets replaced by $f$. Let $\lambda_l$ and $\lambda_r$ be the next smaller and larger equality point of $\lambda^*$, respectively. Then, it is sufficient to consider the functions $y_g$ with $g\in B_\lambda^* - e$ and the function given by
	\begin{displaymath}
		\lambda \mapsto
		\begin{cases}
			y_e(\lambda),  & \text{for} \, \lambda \in (\lambda_l,\lambda^*] \\
			y_f(\lambda), & \text{for} \, \lambda \in (\lambda^*, \lambda_r]
		\end{cases}
	\end{displaymath}
	on the interval $(\lambda_l,\lambda_r]$ by \cref{lemma only elements of B*}.
	The latter function is continuous on $(\lambda_l,\lambda_r]$ by  \cref{corollary y_e(lambda^*)=y_f(lambda^*)}.
	Thus, inductively, over the entire parameter interval~$I$, it is sufficient to consider $k$ continuous and piecewise linear functions for the computation of $y$.
	The claim follows from the upper bound of \cite{AGARWAL20001} as in \cref{theorem upper bound 1}.
\end{proof}

We note that there are, in fact, only $\binom{m}{2}\in\mathcal{O}(m^2)$ many equality points at which the functions $y_e$ can break.
So, not all of the $\mathcal{O}(\m^2 k^{\frac{1}{3}})$ many breakpoints counted above can be distinct.
Actually, we can obtain a tighter bound, namely that only $\mathcal{O}(mk)$ many of these points are candidates for breakpoints.
\begin{theorem}\label{theorem mk breakpoints}
	There are at most $2km$ equality points where one of the functions~$y_e$, for $e\in E$, or the optimal value function $w$ has a breakpoint.
\end{theorem}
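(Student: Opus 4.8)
My plan is to localise the breakpoints of every $y_e$ to two kinds of equality points and then count each kind. The key tool is the explicit form of $y_e$ coming from \cref{lemma replacement element}: if $e\in B_\lambda^*$ then $y_e(\lambda)=w(\lambda)-w(e,\lambda)+w(r_\lambda(e),\lambda)$, and if $e\notin B_\lambda^*$ then $B_\lambda^e=B_\lambda^*$, so $y_e(\lambda)=w(\lambda)$. Since $w(e,\lambda)$ is linear, on each regime $y_e$ can only change slope where $w$ does or where the replacement element $r_\lambda(e)$ changes; moreover the two regimes meet exactly when $e$ enters or leaves $B_\lambda^*$, which is a breakpoint of $w$. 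By \cref{observation replacement element} every such change occurs at an equality point. Consequently the equality points at which some $y_e$ or $w$ breaks split as the disjoint union of (i) the breakpoints of $w$, and (ii) the equality points where some $r_\lambda(e)$ changes while $w$ does not break. I would then show that each of these two sets has size at most $km$.

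The two sets are genuinely disjoint and of different combinatorial type, which is what I would exploit. At a breakpoint $\lambda(g\to h)$ of $w$ the basis exchanges $g\in B_\lambda^*$ for $h\notin B_\lambda^*$, and by \cref{lemma r(e_j) = e} one has $h=r_\lambda(g)$ just before the point; so such a pair $\{g,h\}$ has exactly one endpoint in $B_\lambda^*$. Their number is the breakpoint count of the parametric matroid problem, which I would bound by $km$ (this is weaker than the $\Theta(\m k^{1/3})$ estimate recalled in \cref{sec:prelim}). By contrast, a change of $r_\lambda(e)$ that is not a breakpoint of $w$ swaps two replacement \emph{candidates} $g,h\in R_\lambda(e)$ with $g=r_\lambda(e)$ before and $h=r_\lambda(e)$ after; here \emph{both} endpoints lie outside $B_\lambda^*$, so these points are disjoint from the first set. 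For the latter I would fix $e$ and use that between consecutive breakpoints of $w$ the candidate set $R_\lambda(e)$ is constant by \cref{observation replacement element}, so $r_\lambda(e)=\arg\min_{r\in R_\lambda(e)}w(r,\lambda)$ is the lower envelope of the candidates' weight lines: as $\lambda$ grows, $r_\lambda(e)$ sweeps candidates of strictly decreasing slope and selects each at most once on such an interval. Charging each change to the pair $(e,h)$ formed by $e$ and its new replacement $h$, and noting there are at most $k(\m-k)\le km$ pairs of an element of $B_\lambda^*$ with an element outside it, I would aim to conclude a bound of $km$.

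The step I expect to be hardest is exactly this last count. The concavity of the lower envelope only controls $r_\lambda(e)$ \emph{between} breakpoints of $w$: each such breakpoint can change $B_\lambda^*$, hence $R_\lambda(e)$, and thereby let $r_\lambda(e)$ return to a candidate it had previously abandoned, so the map $\lambda\mapsto(e,r_\lambda(e))$ need not be injective and a naive summation of the per-interval bounds $\lvert R_\lambda(e)\rvert-1$ over all intervals of constant basis overcounts wildly. The crux is therefore to control these recurrences---for instance by charging each recurrence of $r_\lambda(e)$ back to the (few) breakpoints of $w$ that altered $R_\lambda(e)$, or by a global exchange argument showing each pair $(e,h)$ is used only a bounded number of times---so that the replacement-change points really number at most $km$ and the total is at most $2km$.
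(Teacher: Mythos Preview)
Your decomposition is sound: every breakpoint of $y_e$ is either a breakpoint of $w$ or a change of $r_\lambda(e)$, and the two types live at equality points with different combinatorial signatures (one endpoint in $B_\lambda^*$ versus both outside). The trouble is exactly where you flag it: you do not actually bound the number of replacement-element changes. Your charging map $(\text{change of }r_\lambda(e)\text{ to }h)\mapsto (e,h)$ is not injective across breakpoints of $w$, and ``charge recurrences to the few breakpoints of $w$'' is not an argument---each breakpoint of $w$ can alter $R_\lambda(e)$ for \emph{many} $e\in B_\lambda^*$ simultaneously, so a single breakpoint of $w$ could in principle unleash many recurrences, and you give no mechanism that caps this. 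Summing the per-interval lower-envelope bound $|R_\lambda(e)|-1$ over the $\Theta(mk^{1/3})$ intervals of constant basis and over $k$ elements $e$ gives nothing close to $km$. So as written the proof is incomplete at its acknowledged crux.

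The paper sidesteps this difficulty by changing the grouping entirely. Instead of fixing the deleted element $e$ and tracking $r_\lambda(e)$, it fixes the element $e$ that becomes \emph{heavier} at an equality point $\lambda(e\to f)$ and asks: at how many of these (at most $m-1$) points does any $y_g$ or $w$ break? The answer is at most $2k$, proved via a monotone potential: one introduces the restricted matroids $\M_e(\lambda)\coloneqq \M|\{f:w(f,\lambda)<w(e,\lambda),\ \lambda(e,f)<\lambda\}$, which only grow as $\lambda$ increases, and shows that at a breakpoint either (a) $rk(\M_e(\lambda))$ strictly increases, or (b) the new element $f$ joins a circuit with some $g$ that was previously a singleton component of $\M_e(\lambda)$. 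Case~(a) happens at most $k$ times by rank; case~(b) happens at most $k$ times because the elements $g$ so consumed form an independent set. Summing over the $m$ choices of $e$ gives $2km$. This rank-monotonicity argument provides the global progress measure your approach lacks; if you want to rescue your decomposition, you would need an analogous potential that strictly advances at every replacement-element change and is bounded by $km$.
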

\begin{proof}
	We need to introduce a bit of notation for this proof.
	We set
	\begin{align*}
		E_e^<(\lambda) \coloneqq \Set{f\in E\colon\, w(f,\lambda) < w(e,\lambda)}\qquad & \M_e^<(\lambda) \coloneqq \M|E_e^<(\lambda), \\
		E_e(\lambda) \coloneqq \Set{f\in E_e^<(\lambda) \colon\, \lambda(e,f) < \lambda}\qquad & \M_e(\lambda) \coloneqq \M|E_e(\lambda).
	\end{align*}
	Note that $E_e(\lambda) \subseteq E_e(\lambda')$ for all $e\in E$ and $\lambda \leq \lambda'$.
	
	With this notation, let $e\in E$ be fixed and $\lambda_1\coloneqq \lambda(e\to f_1) < \ldots < \lambda_p\coloneqq \lambda(e\to f_p)$ be all equality points of the form $\lambda(e\to f)$ in ascending order.
	We show that there are at most $2k$ points amongst these where $w$ or one of the functions $y_g$ breaks, for $g\in E$.
	Since all equality points are of this form, for some $e\in E$, this implies the claim.
	
	First, we observe that the set $E_e(\lambda)$ remains unchanged on the intervals $(\lambda_p,\infty)$ and $(\lambda_{i-1},\lambda_i]$, where $i\in\Set{1,\ldots,p}$ and $\lambda_0=-\infty$.
	We now regard an equality point $\lambda(e\to f_i)$ where $y_g$ breaks.
	For convenience, we set $f\coloneqq f_i$ and $\lambda\coloneqq \lambda_i = \lambda(e\to f)$.
	Let $B^g$ be an optimal basis before and at $\lambda$ in $\M_g$.
	Since $y_g$ has a breakpoint at $\lambda$, $B^g - e + f$ is an optimal basis after $\lambda$.
	Additionally, let $\lambda'\in(\lambda_i,\lambda_{i+1}]$.
	Then $E_e(\lambda') = E_e(\lambda)+f$.
	
	\textbf{Claim:} Let $F\subseteq E_e^<(\lambda)-g$ be independent in $\M_e^<(\lambda)-g$.
	Then $F\cup\{f\}$ is independent in $\M_e^<(\lambda')-g$.
	
	\begin{proof}
		Let $F\subseteq E_e^<(\lambda)-g$ be independent and suppose $F + f$ is not.
		Augment $F$ to a basis $B'$ of $\M_e^<(\lambda)-g$ and let $B''\coloneqq B^g\cap E_e^<(\lambda)$.
		Now let $\lambda^+ > \lambda$ be chosen such that $\lambda^+$ is smaller than any equality point that is greater than $\lambda$.
		In particular, the greedy algorithm yields $B^g - e + f$ at $\lambda^+$ and both $B'$ and $B''$ are independent in $\M_e^<(\lambda^+)-g = \M_e^<(\lambda) + f - g$.
		
		Since $B''+f \subseteq B^g - e + f$ is independent and $B'+f \supseteq F + f$ is not, we get that $B'$ is a basis of $\M_e^<(\lambda^+)-g$ and $|B''| < |B'|$.
		Hence, there exists an $h\in B'\setminus B''$ such that $B''+ h$ is independent.
		But, since $w(h,\lambda^+) < w(f,\lambda^+)$ and the greedy algorithm rejected $h$ when in possession of a subset $\tilde{B}\subseteq B''$, we must have that $B''+h$ is dependent, which is a contradiction.
	\end{proof}
	
	From the claim, we can immediately conclude that $rk(\M_e(\lambda)-g) < rk(\M_e(\lambda')-g)$, since we can add $f$ to a basis.
	For a breakpoint of $w$ we get the analogous result, the difference being that the claim then holds for independent subsets of $E_e^<(\lambda)$ and we get a rank increase for $\M_e(\lambda)$.
	
	We now distinguish two cases for the rank of $\M_e(\lambda') = \M_e(\lambda) + f$:
	\begin{enumerate}
		\item $rk(\M_e(\lambda')) > rk(\M_e(\lambda))$, that is, $rk(\M_e(\lambda')) = rk(\M_e(\lambda)) + 1$ or \label{itm:matroid-restr-rank-increase}
		\item $rk(\M_e(\lambda')) = rk(\M_e(\lambda))$.   \label{itm:matroid-restr-same-rank}     
	\end{enumerate}
	
	Case~\ref{itm:matroid-restr-rank-increase} always occurs if we are at a breakpoint of $w$.
	It can occur at most $k$ times before the rank of the entire matroid $\M$ is reached.
	To see that this is also true for Case~\ref{itm:matroid-restr-same-rank}, we observe that there must be a circuit $C_1$ in $\M_e(\lambda')$ that contains both $f$ and $g$:
	let $B$ be a basis of $\M_1 \coloneqq \M_e(\lambda)-g$.
	Since $rk(\M_1 + f + g) \geq rk(\M_1+f) = rk(\M_1) + 1$ by the claim and $rk(\M_1 + g + f) = rk(\M_1 + g) \leq rk(\M_1) + 1$ by Case~\ref{itm:matroid-restr-same-rank}, we get that $rk(\M_1 + g) = rk(\M_1) + 1$.
	From this, we can deduce that $B + f$ and $B + g$ are independent, but $B+f+g$ is not.
	Therefore, $\M_1 + f + g$ must have a circuit containing both $f$ and $g$.        
	
	Now, suppose that $g$ is contained in a circuit $C_2$ of $\M_e(\lambda)$.
	By Lemma~\ref{strong-circuit-elimination-axiom}, there exists a circuit $C$ of $\M_e(\lambda')$ that contains $f$ but not $g$.
	But then, $C-f$ is an independent set of $\M_e(\lambda)$ to which we cannot add $f$, contradicting the claim.
	
	To get a bound on the occurrences of Case~\ref{itm:matroid-restr-same-rank}, we note that it can occur at most once for any element of $E$, since when it occurs at an element~$g$, this element is not part of a circuit, and this is no longer true afterwards.
	On the other hand, if Case~\ref{itm:matroid-restr-same-rank} occurred for elements $g_1,\ldots,g_n$, then $\Set{g_1,\ldots,g_n}$ is independent in $\M$.
	This is true for $n=1$ since $g_1$ is not part of a circuit.
	Inductively, we get an independent set $\Set{g_1,\ldots, g_{n-1}}$ and an element $g_n$ that is not part of a circuit, yielding that $\Set{g_1,\ldots,g_n}$ is independent.
	So, once more, the occurrences are bounded by the rank of $\M$.
\end{proof}

From the theorem above, it directly follows that there are only $\mathcal{O}(mk)$ many breakpoints of $y$.
\begin{theorem}\label{theorem mk^2 changepoints}
	There are at most $\mathcal{O}(mk^2)$ many changepoints of $y$.
\end{theorem}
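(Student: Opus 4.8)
The plan is to combine the reduction to $k$ functions from \cref{theorem upper bound 2} with the equality-point count of \cref{theorem mk breakpoints}, and then exploit the elementary fact that a pointwise maximum of finitely many affine functions is convex. Recall from \cref{def:change-break-interdiction-points} that the changepoints of $y$ split into breakpoints and interdiction points, so I would bound each type separately. The breakpoints are already under control: as observed directly after \cref{theorem mk breakpoints}, every breakpoint of $y$ sits at an equality point where the topmost function breaks, so there are at most $2km \in \mathcal{O}(mk)$ of them. It therefore remains to bound the number of interdiction points by $\mathcal{O}(mk^2)$.

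To this end, I would first invoke the proof of \cref{theorem upper bound 2}, which exhibits $y$ as the upper envelope of $k$ continuous, piecewise linear functions $z_1,\dots,z_k$, obtained by stitching the functions $y_e$ together across the breakpoints of $w$ via \cref{corollary y_e(lambda^*)=y_f(lambda^*)}. Every slope change of any $z_i$ occurs at an equality point at which some $y_e$ or $w$ breaks, so by \cref{theorem mk breakpoints} the union of all breakpoints of all the $z_i$ is contained in a set of at most $2km$ points. Writing $\lambda_1 < \dots < \lambda_N$ with $N \le 2km$ for these points, they partition $I$ into $N+1 \in \mathcal{O}(mk)$ subintervals, on each of which every $z_i$ is affine.

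On a fixed such subinterval, $y$ is the pointwise maximum of $k$ affine functions, hence convex, and therefore has at most $k-1$ interior vertices. Each such vertex is a point where the topmost function switches, that is, an interdiction point; conversely, since no $z_i$ breaks in the interior of the subinterval, every interior changepoint of $y$ is of this form. Summing over the $\mathcal{O}(mk)$ subintervals yields at most $(N+1)(k-1) = \mathcal{O}(mk^2)$ interior interdiction points, and adding the at most $N = \mathcal{O}(mk)$ changepoints that can occur at the subinterval boundaries gives the total bound $\mathcal{O}(mk^2)$.

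The step I expect to be the main obstacle is the careful bookkeeping that legitimizes the per-interval argument: one must verify that the breakpoints of the stitched functions $z_i$ really all lie among the $2km$ equality points of \cref{theorem mk breakpoints}, in particular that the stitching points (which are breakpoints of $w$) introduce no slope-change locations outside this set, and that no spurious slope change of $y$ is hidden in the interior of a subinterval. Once it is confirmed that every interior changepoint on a subinterval is an interdiction point rather than a breakpoint, the convexity of the upper envelope of $k$ lines makes the bound $k-1$ per interval immediate, and the product $\mathcal{O}(mk)\cdot(k-1)$ closes the argument.
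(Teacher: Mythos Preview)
Your proposal is correct and is essentially the paper's own argument: partition $I$ at the $\mathcal{O}(mk)$ equality points supplied by \cref{theorem mk breakpoints}, observe that on each resulting subinterval $y$ is the upper envelope of $k$ affine functions, and count at most $k-1$ interdiction points per subinterval. The only cosmetic difference is that you package the $k$ relevant functions via the stitched $z_i$ of \cref{theorem upper bound 2}, whereas the paper simply invokes \cref{lemma only elements of B*} directly on each subinterval; the bookkeeping worry you flag is a non-issue, since the stitching points are breakpoints of $w$ and are explicitly included among the $2km$ points of \cref{theorem mk breakpoints}.
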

\begin{proof}
	By \cref{theorem mk breakpoints}, there are at most $\mathcal{O}(mk)$ many breakpoints of $y$ and between two successive breakpoints all functions $y_e$ are linear. According to \cref{lemma only elements of B*} only the $k$ functions $y_e$ where $e \in B_\lambda^*$ are relevant for the computation of $y$ and can produce at most $k-1$ interdiction points between to successive breakpoints. 
	This yields at most $\mathcal{O}(mk^2)$ many interdiction points and therefore also at most $\mathcal{O}(mk^2)$ many changepoints.
\end{proof}

Finally, we provide a lower bound on the number of changepoints of $y$ by transferring the bound from the non-interdiction variant.
\begin{theorem}\label{theorem lower bound}
	The optimal interdiction value function $y$ of the parametric matroid one-interdiction problem has at least $\Omega(\m k^{\frac{1}{3}})$ many changepoints.
\end{theorem}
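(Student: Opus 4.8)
The plan is to \emph{transfer} the known lower bound $\Omega(\m k^{1/3})$ on the number of breakpoints of the optimal value function $w$ of the (non-interdiction) parametric matroid problem \cite{eppstein1995geometric} to the interdiction setting. The idea is to embed a hard instance of the parametric matroid problem into an interdiction instance in which one distinguished element is \emph{always} most vital; then $y$ coincides, up to a linear function, with the optimal value function of the embedded instance and hence inherits all of its breakpoints as changepoints.

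Concretely, I would start from a matroid $\M^0=(E^0,\F^0)$ of rank $k^0$ on $m_0=|E^0|$ elements, with parametric weights chosen so that its optimal value function $w^0$ has $\Omega(m_0 (k^0)^{1/3})$ breakpoints; such an instance exists by the construction of \cite{eppstein1995geometric}. Moreover we may assume $\M^0$ has no coloops, since a coloop lies in every minimum weight basis, contributes only a linear term to $w^0$, and can therefore be deleted without changing the number of breakpoints. I then form $\M$ as the disjoint union (direct sum) of $\M^0$ with a two-element matroid on $\{p,q\}$ whose only circuit is $\{p,q\}$, so that $\{p,q\}$ has rank $1$ and exactly one of $p,q$ lies in every basis of $\M$. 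Thus $\m=m_0+2$ and $k=k^0+1$. Giving $p$ and $q$ large constant weights with $w(p,\lambda)<w(q,\lambda)$ throughout forces $B_\lambda^*=\{p\}\cup B_\lambda^0$ for every $\lambda$, where $B_\lambda^0$ is the minimum weight basis of $\M^0$.

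The key step is to identify the most vital element. Removing $p$ forces $q$ into the basis at the additive cost $\Delta\coloneqq w(q,\lambda)-w(p,\lambda)$, so $y_p(\lambda)=w(q,\lambda)+w^0(\lambda)$; removing $q$ or any element outside $B_\lambda^*$ leaves the basis weight unchanged; and removing $e\in B_\lambda^0$ costs only the $\M^0$-interdiction jump $w(r_\lambda(e),\lambda)-w(e,\lambda)$. The main obstacle is to guarantee that $p$ dominates all of these, since over $I=\mathbb{R}$ the jumps inside $\M^0$ grow linearly and cannot be beaten by a constant $\Delta$. I resolve this by restricting to a bounded parameter interval $I$ containing all breakpoints of $w^0$, which is legitimate for a lower bound as we are free to choose the instance, including $I$. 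On such a compact $I$ all finitely many weight functions, and hence all $\M^0$-interdiction jumps, are bounded by some constant $M$; choosing $\Delta>M$ makes $p$ strictly most vital for every $\lambda\in I$.

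It then follows that $y=y_p=w(q,\cdot)+w^0$ on $I$, and since $w(q,\cdot)$ is linear, the most vital element never changes and the changepoints of $y$ are precisely the breakpoints of $w^0$. With $m_0=\m-2=\Theta(\m)$ and $k^0=k-1=\Theta(k)$ this gives $\Omega(m_0(k^0)^{1/3})=\Omega(\m k^{1/3})$ changepoints. I would finish by verifying \cref{assumptions} for $\M$: the minimum weight basis is unique (the $\{p,q\}$ component contributes $p$ uniquely, and $\M^0$ may be taken with unique bases), every $\M_e$ has a basis of rank $k$ because $\M^0$ is coloop-free and deleting $p$ or $q$ leaves the other as a coloop that recovers the lost unit of rank, and the remaining genericity conditions are secured by an arbitrarily small perturbation of the weights that does not affect the breakpoint count.
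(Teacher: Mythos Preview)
Your argument is correct, but it takes a different route from the paper's proof. The paper constructs $(E',\F')$ by adding a \emph{parallel copy} $e'$ of every element $e$ with the same parametric weight, so that interdicting any basis element $e$ costs nothing (its copy $e'$ replaces it), which yields $y'\equiv w$ directly on all of $\mathbb{R}$. You instead take the direct sum with a single two-element parallel class $\{p,q\}$ and force $p$ to be uniquely most vital by making the gap $\Delta$ dominate all replacement costs inside $\M^0$ on a bounded interval. Both reductions are valid. The paper's construction is slicker: it requires no compactness argument, no tuning of $\Delta$, and no separate handling of coloops (since every element becomes non-coloop automatically once paired with its copy). Your construction, on the other hand, modifies the matroid minimally (two extra elements instead of doubling the ground set) at the price of having to restrict to a compact $I$, remove coloops from $\M^0$, and verify that the $\M^0$-interdiction costs are bounded there; these steps are all fine as you handle them.
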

\begin{proof}
	Let $(E,\F)$ be a matroid with parametric weights $w(e,\lambda)$. We define $(E',\F')$ by $E' = E \cup \{e'\colon e\in E\}$, $w(e',\lambda)=w(e,\lambda)$ for all $\lambda\in I, e\in E$ and $\F'=\Set{G'\cup F\colon\, G \mathop{\dot{\cup}} F\in\F}$ where $F'=\Set{e'\colon\, e \in F}$ for $F\in \F$.
	Then, $(E',\F')$ is again a matroid.
	Let $w$ be the optimal value function of $(E,\F)$. Let $w'$ be the optimal value function, $y'$ be the optimal interdiction value function and $B'_\lambda$ be an optimal basis of $(E',\F')$.
	Let $e \in B'_\lambda$. We assume without loss of generality that $e\in E$. Then, 
	$B_\lambda' - e + e'$ is a minimum weight basis of $(E',\F')$ at $\lambda$ and hence $y'\equiv y'_e \equiv w$.
	The claim follows by taking as $(E,\F)$ the matroid whose optimal value function has at least $\Omega(\m k^{\frac{1}{3}})$ many breakpoints, cf.\ \cite{eppstein1995geometric}.
\end{proof}

\section{Algorithmic consequences} \label{sec:algorithms}

Here, we develop algorithms for computing the optimal interdiction value function $y$ and for finding the solutions corresponding to the linear segments of $y$.
In the running time analyses, we shall need a few operations that we now provide notation for.
We will flesh out the details of how these can be implemented when looking at the special case of graphic matroids at the end of this section.
\begin{definition}[Operations on matroids.]
	We use the following notation to denote running times of certain matroid operations:
	\begin{description}
		\item[$f(\m)$] the time needed to perform a single independence test.
		\item[$g(\m)$] the time needed to update the components of $\M|E'$ to $\M|(E'+f)$.
		\item[$h(\m)$] the time needed to compute a replacement element $r_\lambda(e)$ for a given $\lambda\in I$ and $e\in B_\lambda^*$.
	\end{description}
\end{definition}
It will also be helpful to have amortised versions of the above operations since they allow for better running times when we apply them to graphic matroids.
To this end, we let $G(m)$ denote the time needed to do $m$ component updates, for the matroids $\M|E_0,\ldots,\M|E_m$ where $E_i\supseteq E_{i-1}$ and $E_i\setminus E_{i-1}$ contains exactly one element.
In particular, $E_0=\emptyset$ and $E_m = E$.
Finally, we let $H(m)$ denote the amortised cost for computing all $k$ replacement elements for elements $e\in B_\lambda^*$.
Since $G(m)\in \O{mg(m)}$ and $H(m)\in \O{kh(m)}$, running times without the amortised functions are easily obtained.

\textbf{Algorithms for the parametric matroid interdiction problem.}
We start by algorithmically using \cref{theorem upper bound 1}.
\begin{theorem}\label{theorem upper bound 1 computation}
	\cref{problem matroid} can be solved in time
	\begin{displaymath}
		\O{m^2(kf(m)+k^{\frac{1}{3}}\log(mk))}. 
	\end{displaymath}
\end{theorem}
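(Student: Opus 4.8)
The plan is to split the computation into two stages whose costs match the two summands of the running time: first, a left-to-right sweep over the equality points that maintains explicit linear descriptions of the functions $y_e$ which can be most vital, and second, an upper-envelope computation that turns these into $y$ together with a most vital element on each segment.

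For the first stage, I would sort the $\binom{m}{2}\in\O{m^2}$ equality points (in time $\O{m^2\log m}$, absorbed into the second summand) and sweep $\lambda$ across them. By \cref{lemma only elements of B*} only the functions $y_e$ with $e\in B_\lambda^*$ can attain $y$, so at every parameter value it suffices to track these $k$ functions. By \cref{lemma replacement element} each such $y_e$ is given on the current interval by $y_e(\lambda)=w(\lambda)-w(e,\lambda)+w(r_\lambda(e),\lambda)$, and by \cref{observation replacement element} this description changes only when $\lambda$ crosses an equality point. At an equality point $\lambda(f\to g)$ I would perform $\O{1}$ independence tests per tracked function to decide whether it breaks and, if so, to update it: the function $w$ breaks iff the exchange $f\to g$ occurs in $B_\lambda^*$ (in which case I update the active set, using \cref{lemma r(e_j) = e} to read off the replacement element $r_\lambda(g)=f$ of the entering element for free), and a function $y_e$ breaks iff the exchange occurs in the basis $B_\lambda^e=B_\lambda^*-e+r_\lambda(e)$ of $\M_e$. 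Since there are $k$ tracked functions, this amounts to $\O{k}$ independence tests per equality point, hence $\O{m^2k}$ tests and $\O{m^2kf(m)}$ time over the whole sweep.

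The sweep emits all linear pieces of the tracked functions in order of $\lambda$. By the proof of \cref{theorem upper bound 1} their total number is $t\in\O{m^2k^{\frac{1}{3}}}$, and by \cref{lemma only elements of B*} at most $k$ of them are active simultaneously. For the second stage I would compute the upper envelope of these $\O{t}$ pieces by a standard divide-and-conquer (or sweepline) procedure for continuous piecewise-linear functions; since at most $k$ functions are active at once and the pieces already arrive sorted in $\lambda$, this costs $\O{\log(mk)}$ per piece, for a total of $\O{t\log(mk)}=\O{m^2k^{\frac{1}{3}}\log(mk)}$. The function attaining the envelope on each output segment is precisely a most vital element there, so this returns exactly the information required by \cref{problem matroid}.

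The main obstacle is the independence-test accounting of the first stage: one has to show that the local update at each equality point really needs only $\O{k}$ tests, i.e.\ that a breakpoint of $w$ or of some $y_e$ corresponds to a single basis exchange that can be detected and applied from the data carried along the sweep, rather than recomputing replacement elements from scratch (which would cost $\O{mk}$ per point). This is exactly where \cref{lemma r(e_j) = e} and \cref{observation replacement element} are needed, together with the fact that $w$ has only $\O{mk^{\frac{1}{3}}}$ breakpoints so that changes of the active set are rare; once these linear pieces and their activity intervals are available, the envelope computation of the second stage is routine.
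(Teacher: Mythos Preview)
Your proposal is correct and follows essentially the same two-stage plan as the paper: sort the $\O{m^2}$ equality points, sweep across them while maintaining the relevant interdicted bases with $\O{k}$ independence tests per equality point, and then compute the upper envelope of the $\O{m^2k^{1/3}}$ resulting linear pieces in $\O{t\log t}$ time via Hershberger's algorithm. The only cosmetic difference is that the paper keeps all $m$ bases $B^g$ and gets down to $k+1$ independence tests per equality point by observing that the bases $B^g$ with $g\notin B^*_\lambda$ all coincide with $B^*_\lambda$, whereas you explicitly restrict to the $k$ functions $y_e$ with $e\in B^*_\lambda$ and splice at breakpoints of $w$ (as in the proof of \cref{theorem upper bound 2}); one small bookkeeping point you gloss over is that at a breakpoint $\lambda(f\to g)$ of $w$, an existing tracked function $y_e$ with $r_\lambda(e)=g$ does not break but its replacement element silently becomes $f$, which you must also record---this is an $\O{1}$ update per tracked function and does not affect the running time.
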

\begin{proof}
	We compute $y$ as the upper envelope of the functions $y_e$ for $e\in E$.
	To this end, we need to determine the functions $y_e$ first. 
	We compute all equality points and sort them in $\mathcal{O}(m^2\log m)$ time.
	
	Next, we use the greedy algorithm to find an optimal basis $B$ for $\M$ and optimal bases $B^e$ for $\M_e$, $e\in E$ for some value $\lambda$ before the first equality point.
	We then iterate over the equality points in ascending order.
	Regarding an equality point $\lambda = \lambda(e\to f)$.
	If a function $y_g$ breaks at $\lambda$, then the basis $B^g$ is replaced by $B^g - e + f$.
	Thus, we simply need to check whether $B^g - e + f$ is independent for all bases $B^g$ with $e\in B^g$ and $f\notin B^g$.
	But, since all bases $B^g$ with $g\notin B$ are identical, we only need to make $k+1$ independence tests to answer these questions.
	Thus, the entire procedure requires $\mathcal{O}(m^2kf(m))$ time. 
	
	The upper envelope of $\m$ piecewise linear, real-valued and continuous functions can be computed in $\mathcal{O}(t \log t)$ time, cf.\ \cite{HERSHBERGER1989169}, where $t$ is the total number of linear pieces in the graphs of the functions $y_e$.
	For our problem, $t$ is in $\mathcal{O}(\m^2 k^{\frac{1}{3}})$ since each of the $\m$ functions $y_e$ has at most $\mathcal{O}(\m k^{\frac{1}{3}})$ many breakpoints, cf.\ \cite{dey1998improved}. 
	The most vital element $e^*$ at a point $\lambda$ is just the element $e^*$ for which $y(\lambda) = y_{e^*}(\lambda)$.
	Hence, the computation of $y$ as the upper envelope of the functions $y_e$ also provides us with the most vital elements.
	
	In summary, we obtain a total running time of $$\mathcal{O}(m^2\log m + m^2kf(m) + \m^2 k^{\frac{1}{3}} \log(\m k)) = \mathcal{O}(m^2(kf(m)+k^{\frac{1}{3}}\log(mk))).$$
\end{proof}

Next, we want to make use on the reduction of the number of potential breakpoints provided by \cref{theorem mk breakpoints}.
To this end, we need to be able to compute this reduced candidate set.
\begin{theorem} \label{theorem mk breakpoints computation}
	We can compute a set of equality points of cardinality $\mathcal{O}(mk)$ that contains all breakpoints of the functions $y_e$ in \O{\m^2(\log m + f(\m)) + \m G(\m)} time.
\end{theorem}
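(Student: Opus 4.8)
The plan is to exploit the structural dichotomy established in the proof of \cref{theorem mk breakpoints}: a breakpoint of some function $y_g$ can occur at an equality point $\lambda=\lambda(e\to f)$ only if adding $f$ to the restriction $\M_e(\lambda)$ either increases its rank (Case~1) or makes a previously coloop element become part of a circuit (Case~2). Since, for each fixed $e\in E$, Case~1 is triggered at most $k$ times and Case~2 at most $k$ times, recording every equality point at which one of the two cases occurs yields, over all $e$, a candidate set of size at most $2km\in\O{mk}$ that provably contains all breakpoints of the functions $y_e$. The algorithm therefore loops over all $e\in E$ and, for each $e$, detects exactly these triggering equality points.

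For a fixed $e$, I would first compute the $\O{m}$ equality points of the form $\lambda(e\to f)$ and sort them ascendingly as $\lambda_1<\dots<\lambda_p$; recall from the proof of \cref{theorem mk breakpoints} that these are precisely the points at which $E_e(\lambda)$ grows, each time by a single element $f_i$. I then sweep through $\lambda_1,\dots,\lambda_p$ while incrementally maintaining $\M_e(\lambda)$ together with its component structure, starting from the empty matroid. At step $i$, a single independence test decides whether $f_i$ increases the rank of $\M_e(\lambda_{i-1})$. If it does, we are in Case~1, so we record $\lambda_i$ and update the components by inserting $f_i$ as a new singleton; otherwise $f_i$ closes a circuit, we perform a component update and check whether this update absorbs a singleton component, i.e.\ whether some coloop $g$ has just become part of a circuit. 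In the latter situation we are in Case~2 and record $\lambda_i$. By the analysis of \cref{theorem mk breakpoints} every breakpoint of a function $y_g$ is captured in this way, and the set recorded for a fixed $e$ has size at most $2k$.

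It remains to account for the running time. Over all $e$, computing and sorting the equality points costs $\O{m^2\log m}$, and the single independence test performed at each of the at most $m$ sweep steps, for each of the $m$ elements, contributes $\O{m^2 f(m)}$. For a fixed $e$, the component updates form an incremental chain that starts at the empty matroid and adds one element at a time, so its total cost is bounded by $G(m)$; summing over the $m$ choices of $e$ gives $\O{m\,G(m)}$. Adding these terms yields the claimed bound $\O{m^2(\log m + f(m)) + m\,G(m)}$.

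The main obstacle is the efficient and correct detection of Case~2: we must recognise, from a component update alone, the moment at which a coloop of $\M_e(\lambda)$ ceases to be one, and we must ensure that the number of distinct equality points at which this happens stays within $k$ even though several coloops may be absorbed simultaneously at one point. Both facts follow from the structure exposed in \cref{theorem mk breakpoints}---a coloop corresponds to a singleton component, it can lose this status only once, and the collection of all such elements is independent in $\M$---but realising this detection within an amortised $G(m)$ component-maintenance budget is the delicate part, and is exactly where the special structure of graphic matroids will be used later in this section.
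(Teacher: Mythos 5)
Your overall scheme matches the paper's own proof (\cref{alg:bp-candidates}): sort the equality points, sweep them while maintaining each restricted matroid together with a basis and its components, record a point whenever an independence test reveals a rank increase (Case~1) or the component update triggered by the new element absorbs a singleton component (Case~2), and account the cost as $\mathcal{O}(m^2\log m)$ for sorting, $\mathcal{O}(m^2 f(m))$ for independence tests, and $m\,G(m)$ for component maintenance, with the $2km$ counting taken from \cref{theorem mk breakpoints}.

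There is, however, a genuine gap in your initialization. You maintain $\M_e(\lambda)$ ``starting from the empty matroid'', i.e.\ you implicitly assume that $E_e(\lambda)=\emptyset$ before the first point of the form $\lambda(e\to f)$. That is false in general: $E_e(\lambda)$ consists of all $f$ with $w(f,\lambda)<w(e,\lambda)$ and $\lambda(e,f)<\lambda$, so it contains every $f$ whose weight function never intersects that of $e$ but always lies below it (then $\lambda(e,f)=-\infty$), for instance elements with $b_f=b_e$ and $a_f<a_e$; \cref{assumptions} does not exclude such parallel weight functions. Consequently, the matroid you actually sweep is $\M$ restricted to $\{f\colon\lambda(e\to f)<\lambda\}$, which can be a proper subset of $E_e(\lambda)$, and your justification ``by the analysis of \cref{theorem mk breakpoints} every breakpoint is captured'' does not go through, since that analysis concerns rank increases and singleton components of $\M_e(\lambda)$ itself, not of your smaller matroid. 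The paper avoids this by initializing each $\M_e$ at a point $\lambda_0$ smaller than all equality points: it computes the elements of $E_e(\lambda_0)$, a greedy basis (this is where part of the $\mathcal{O}(m^2 f(m))$ term arises), and the components, and only then sweeps. Your empty-start variant can in fact be repaired---a rank increase of $\M_e(\lambda)$ forces a rank increase of your submatroid, and in Case~2 one can argue via \cref{strong-circuit-elimination-axiom} that the witness $g$ must lie in your submatroid, be a singleton component there, and be absorbed there as well---but this argument is absent from your proposal, and simply initializing as the paper does is easier and stays within the claimed running time. A final minor point: your closing worry that detecting the absorption of a singleton within the $G(m)$ budget requires graphic-matroid structure is unnecessary at this level of abstraction; $G(m)$ is by definition the cost of the incremental component updates, reading off which components merged is part of that operation, and graphic matroids only enter later when $G(m)$ is bounded concretely.
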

\begin{proof}
	We make use of the proof of \cref{theorem mk breakpoints} to obtain the desired set:
	to this end, we note that, if an equality point $\lambda\coloneqq \lambda(e\to f)$ is a candidate for a breakpoint, then either Case~\ref{itm:matroid-restr-rank-increase} or Case~\ref{itm:matroid-restr-same-rank} occurs.
	In Case~\ref{itm:matroid-restr-same-rank}, the element $f$ added to $\M_e(\lambda)$ is in a circuit with an element $g$ that forms its own component.
	Thus, we can proceed as outlined in \cref{alg:bp-candidates} to get the set of candidates.
	
	We first compute all equality points and sort them in ascending order, which requires \O{m^2\log m} time.
	Next, we pick a point $\lambda$ before the first equality point and initialise the matroids $\M_e(\lambda)$ by computing the elements they contain, a basis for each, and their components.
	The first two parts take \O{m\log m} and \O{m^2f(m)} time, respectively.
	To get the set of candidates, we now just need to iterate over the equality points in the sorted order, checking whether Case~\ref{itm:matroid-restr-rank-increase} or Case~\ref{itm:matroid-restr-same-rank} occurs at each.
	If so, we add it to the list of candidates and otherwise it can safely be skipped.
	Since these cases can occur at most \O{mk} times, the result follows.
	
	To see how these cases can be checked, we start with the former.
	By checking independence of $B^e + f$ we can determine whether the rank of $\M_e$ increases when we transition to $\M_e + f$.
	If it is independent, we are in Case~\ref{itm:matroid-restr-rank-increase} and we add it to the set of candidates.
	We also update the basis, so our variables remain up to date.
	This takes \O{f(m)} time.
	
	Afterwards, we compute the components of $\M_e+f$ time from those of $\M_e$.
	To complete the checking of the equality point, we check the component of~$f$, which is the union of prior components with the set $\Set{f}$.
	If one of the components unified is a singleton, we are in Case~\ref{itm:matroid-restr-same-rank} and we add it to the list of candidates.
	Otherwise, we are in neither of the two cases and are not faced with a breakpoint, letting us disregard the equality point.
	
	The only running time we have not yet analysed are the component updates of the matroids $\M_e(\lambda)$.
	Since these are only $m$ matroids to which we add individual elements at most $m$ times, the running time is bounded by $G(m)$ for each.
\end{proof}

\begin{algorithm}[t]
	\SetKwFunction{FindCandidates}{FindCandidates}
	\KwIn{A matroid \M{} with edge weights $w(e,\lambda)$ and parameter interval~$I$.}
	\KwOut{A set of \O{mk} equality points that contain all breakpoints of the functions $y_e$ and $w$.}
	\Def{\FindCandidates{$\M$, $w(e,\lambda)$, $I$}}{
		Compute all equality points $\lambda(e\to f)$ and sort them ascendingly\;
		Let $\lambda$ be a point before the first equality point\;
		$\M_e\algass \M_e(\lambda)$ for all $e\in E$\;
		Compute a basis $B^e$ of $\M_e$ and the components of $\M_e$ for all $e\in E$\;
		$C\algass \emptyset$\;
		\For{equality point $\lambda\coloneqq\lambda(e\to f)$}
		{
			\uIf{$B^e + f$ is independent}
			{
				$C \algass C \cup \{\lambda\}$\;
				$B^e \algass B^e + f$\;
			}
			Compute components of $\M_e + f$ from those of $\M_e$\;
			$\M_e \algass \M_e + f$\;
			\uIf{the component of $f$ subsumed a singleton}
			{
				$C \algass C \cup \{\lambda\}$\;
			}
		}
	}
	\caption{An algorithm for computing a candidate set of \O{mk} breakpoints.
		\label{alg:bp-candidates}}
\end{algorithm}
\setcounter{algocf}{1}

We can use this procedure to shorten the first step of the algorithm in \cref{theorem upper bound 1 computation} from $\mathcal{O}(m^2kf(m))$ to $\mathcal{O}(mk^2f(m) + \m^2(\log m + f(\m)) + \m G(\m))$ time.
This yields the following result.
\begin{corollary}
	\label{theorem upper bound 1 computation improved}
	The optimal interdiction value function $y$ of the parametric matroid one-interdiction problem can be computed in time
	\begin{displaymath}
		\O{m^2f(m) + mk^2f(m)+m^2k^{\frac{1}{3}}\log(mk)+mG(m)}.
	\end{displaymath}
\end{corollary}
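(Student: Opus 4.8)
The plan is to merge the candidate-generating procedure of \cref{theorem mk breakpoints computation} into the three-phase algorithm underlying \cref{theorem upper bound 1 computation}, replacing only its expensive middle phase. Recall that the algorithm of \cref{theorem upper bound 1 computation} proceeds by (i) computing and sorting all $\O{m^2}$ equality points, (ii) determining the functions $y_e$ by sweeping over every equality point and, at each, performing $k+1$ independence tests to decide which bases $B^g$ break, and (iii) forming $y$ as the upper envelope of the $y_e$. Phase (ii) is the bottleneck at $\O{m^2 k f(m)}$, precisely because it touches all $\O{m^2}$ equality points. The key idea is that, by \cref{theorem mk breakpoints}, only $\O{mk}$ equality points can possibly be breakpoints of the functions $y_e$, and \cref{theorem mk breakpoints computation} shows how to extract such a candidate set explicitly.

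Concretely, I would first run \cref{alg:bp-candidates} to obtain the set of $\O{mk}$ candidate breakpoints, at a cost of $\O{m^2(\log m + f(m)) + mG(m)}$. I would then carry out phase (ii) of the original algorithm verbatim, except that the sweep ranges over this candidate set rather than over all equality points. This is legitimate precisely because the candidate set is guaranteed to contain every breakpoint of every $y_e$: between two consecutive candidates no function $y_e$ breaks, so the maintained bases $B^g$ of the matroids $\M_g$ stay constant and the intervening equality points can be skipped without affecting correctness. The bases are initialised by the greedy algorithm at a point before the first equality point (within the $\O{m^2 f(m)}$ budget, and noting that all $B^g$ with $g\notin B$ coincide), and are updated only at those candidates that turn out to be genuine breaks. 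Each of the $\O{mk}$ candidates again requires only $k+1$ independence tests, so the revised phase (ii) costs $\O{mk^2 f(m)}$ instead of $\O{m^2 k f(m)}$.

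Phase (iii) is left unchanged: the upper envelope of the $m$ piecewise-linear, continuous functions $y_e$ is computed in $\O{t\log t}$ time with $t\in\O{m^2 k^{\frac{1}{3}}}$, i.e.\ $\O{m^2 k^{\frac{1}{3}}\log(mk)}$, and this simultaneously yields the most vital element on each linear segment. Summing the three phases gives $\O{m^2\log m + m^2 f(m) + mG(m) + mk^2 f(m) + m^2 k^{\frac{1}{3}}\log(mk)}$; since $k\geq 1$ the sorting term $m^2\log m$ is absorbed into $m^2 k^{\frac{1}{3}}\log(mk)$, leaving exactly the claimed bound.

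I expect the only genuinely delicate point to be the bookkeeping that justifies restricting the sweep to the candidate set while still maintaining the bases $B^g$ of the full deletion matroids $\M_g$ correctly; note that these are different objects from the restricted matroids $\M_e(\lambda)$ whose bases \cref{alg:bp-candidates} tracks, so the two passes maintain separate data structures. Once one observes that a basis $B^g$ can change only at a breakpoint of $y_g$, and that all such breakpoints lie in the candidate set, the correctness of skipping the remaining equality points follows and the running-time accounting is routine.
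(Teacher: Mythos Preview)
Your proposal is correct and follows exactly the approach the paper takes: the paper states the corollary immediately after \cref{theorem mk breakpoints computation} with the one-line justification that the procedure there ``shorten[s] the first step of the algorithm in \cref{theorem upper bound 1 computation} from $\mathcal{O}(m^2kf(m))$ to $\mathcal{O}(mk^2f(m) + m^2(\log m + f(m)) + mG(m))$ time.'' Your write-up simply fleshes this out, including the correct observation that the $m^2\log m$ sorting term is absorbed by $m^2k^{1/3}\log(mk)$ and the careful remark that the bases $B^g$ maintained in phase~(ii) are distinct from the restricted-matroid bases tracked inside \cref{alg:bp-candidates}.
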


Next, we shall apply an alternative approach to computing $y$.
Instead of computing the functions $y_e$ to then obtain $y$ as their upper envelope, we now determine the behaviour of the functions $y_e$ only between two consecutive (potential) breakpoints.
Then we determine their upper envelope locally on this interval before proceeding to the next one.
This results in the following running time.
\begin{theorem} \label{thm: algorithm breakpoint intervals}
	\Cref{problem matroid} can be solved in time
	\begin{displaymath}
		\O{\m^2(\log m + f(\m)) + \m G(\m) + mk(H(m) + k\log k)}.
	\end{displaymath}
\end{theorem}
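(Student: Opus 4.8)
The plan is to bypass the global computation of the functions $y_e$ and instead sweep once over the $\mathcal{O}(mk)$ candidate breakpoints furnished by \cref{theorem mk breakpoints computation}, assembling $y$ interval by interval. First I would run \cref{alg:bp-candidates} to obtain the sorted candidate set $C$ of $\mathcal{O}(mk)$ equality points that contains every breakpoint of $w$ and of all functions $y_e$; by \cref{theorem mk breakpoints computation} this costs $\O{m^2(\log m + f(m)) + mG(m)}$. In parallel, a greedy computation at a point before the first candidate initialises the minimum weight basis $B_\lambda^*$ in $\O{m\log m + mf(m)}$ time. I then maintain $B_\lambda^*$ along the sweep: at a candidate point $\lambda(e\to f)$ I test whether $e\in B_\lambda^*$, $f\notin B_\lambda^*$ and $B_\lambda^*-e+f\in\F$; if so it is a breakpoint of $w$ and I update $B_\lambda^*$ by the swap $e\to f$. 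Each test is a single independence test, so maintaining $B_\lambda^*$ over all $\mathcal{O}(mk)$ candidates costs $\O{mkf(m)}\subseteq\O{m^2f(m)}$ and is subsumed.

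The heart of the argument is a single interval $(\lambda_i,\lambda_{i+1}]$ between two consecutive candidate points, on which $B_\lambda^*$ is a fixed basis $B$. Because $C$ contains \emph{every} breakpoint of every $y_e$, each relevant function $y_e$ with $e\in B$ has no breakpoint in the interior of the interval and is therefore linear on it. By \cref{observation replacement element} and \cref{lemma replacement element}, computing the replacement element $r_\lambda(e)$ at a single $\lambda\in(\lambda_i,\lambda_{i+1}]$ just after $\lambda_i$ yields the linear expression $\lambda'\mapsto w(B,\lambda') - w(e,\lambda') + w(r_\lambda(e),\lambda')$, which coincides with $y_e$ up to the next equality point; since $y_e$ is linear across the whole interval, agreement on this sub-interval forces agreement on all of $(\lambda_i,\lambda_{i+1}]$. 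Computing all $k$ replacement elements for the basis elements costs $H(m)$ in the amortised model. By \cref{lemma only elements of B*} only these $k$ functions can be most vital, so I would then compute the upper envelope of the resulting $k$ lines and clip it to $(\lambda_i,\lambda_{i+1}]$; the upper envelope of $k$ lines is convex with at most $k$ pieces and is obtained in $\O{k\log k}$ time, the line attaining each piece identifying the most vital element there.

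Concatenating the clipped local envelopes over all $\mathcal{O}(mk)$ intervals produces $y$ on the whole of $I$; continuity of $y$ guarantees that the pieces join consistently at the candidate points, and reading off the maximising line on each segment yields the corresponding most vital elements (the output has $\mathcal{O}(mk^2)$ pieces, matching \cref{theorem mk^2 changepoints}). Summing the costs gives the preprocessing term $\O{m^2(\log m + f(m)) + mG(m)}$ plus $\mathcal{O}(mk)$ intervals each costing $H(m) + \O{k\log k}$, that is
\begin{displaymath}
\O{\m^2(\log m + f(\m)) + \m G(\m) + mk(H(m) + k\log k)},
\end{displaymath}
as claimed. The main obstacle is the correctness of the local-envelope step: one must be certain that no breakpoint of any relevant $y_e$ is hidden strictly inside an interval, so that a single replacement-element computation per basis element faithfully reproduces the entire linear piece of $y_e$ on that interval. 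This is exactly the guarantee provided by the candidate set of \cref{theorem mk breakpoints computation}, and it is what makes the amortised replacement-element cost $H(m)$ (rather than a full recomputation) suffice.
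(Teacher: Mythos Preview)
Your proposal is correct and follows essentially the same approach as the paper: run \cref{alg:bp-candidates} to obtain the $\mathcal{O}(mk)$ candidate breakpoints, maintain the optimal basis across the resulting intervals via single independence tests, and on each interval compute the $k$ replacement elements in $H(m)$ time to obtain the linear pieces of the relevant $y_e$ before taking their upper envelope in $\O{k\log k}$. The paper's proof is slightly terser (it simply asserts that the bases $B_i-e+r_e$ are optimal on all of $I_i$ since no breakpoint lies in the interior), whereas you spell out the extension argument via agreement on a sub-interval and the concatenation via continuity, but the algorithm and the running-time accounting are the same.
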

\begin{proof}
	The algorithm we describe is illustrated in \cref{alg:bp-intervals}.
	First, we run \cref{alg:bp-candidates} to get a set of candidates for potential breakpoints $\lambda_1,\ldots,\lambda_r$.
	We then define $I_0\coloneqq (-\infty,\lambda_1]$, $I_i\coloneqq [\lambda_i,\lambda_{i+1}]$ for $1\leq i < r$, and $I_r\coloneqq [\lambda_r,\infty)$.
	
	Next, we note that, inside the intervals, the optimal value function $w$ remains linear and corresponds to the same basis.
	Thus, we compute optimal bases $B_i$ for $\M$ for all intervals $I_i$, $i=0,\ldots,r$.
	This only requires $\O{mf(m)}$ for the first one and a single independence test at any equality point $\lambda_i$, for a total of \O{mkf(m)} time.
	
	We now compute $y$ for the intervals $I_i$ individually and combine the results.
	To do so, we first determine $y_e$ on $I_i$ for $e\in B_i$.
	This can be done by taking $B_i$ and computing replacement elements $r_e\coloneqq r_\lambda(e)$ for all $e\in B_i$ at some $\lambda\in I_i$ in \O{H(m)} time.
	By \cref{lemma replacement element}, the bases $B_i - e + r_e$ are optimal at $\lambda$ and thus on all of $I_i$, giving us $y_e(\lambda) = w(B_i - e + r,\lambda)$ here.
	
	Using the $y_e$ and \cref{lemma only elements of B*}, we can compute $y$ on $I_i$ as the upper envelope of the $y_e$ using the algorithm in \cite{HERSHBERGER1989169}, which requires $\O{k\log k}$ time.
	The combined running time thus amounts to
	\begin{align*}
		&\O{\m^2(\log m + f(\m)) + \m G(\m) + mkf(m) + mkH(m) + mk^2\log k} \\
		=\; &\O{\m^2(\log m + f(\m)) + \m G(\m) + mk(H(m) + k\log k)}.
	\end{align*}
\end{proof}
\begin{algorithm}[t]
	\Input{A matroid \M{} with edge weights $w(e,\lambda)$ and parameter interval~$I$.}
	\Output{A representation of the upper envelope of $y$.}
	
	Compute a set of candidate breakpoints $\lambda_1,\ldots,\lambda_r$ using \cref{alg:bp-candidates}\;
	Let $I_0,\ldots,I_{r}$ be the corresponding intervals\;
	Compute optimal bases $B_i$ for $\M$ on $I_i$ for all $i=0,\ldots,r$\;
	\For{$i = 0, \dotsc, r$}{
		\For{$e \in B_i$}{
			Compute the replacement element $r_e$ for $e$\;
			Obtain $y_e$ on $I_i$ by using the optimal basis $B_i - e + r_e$\;
		}
		Compute $y$ on $I_i$\;
	}
	\Return $y$\;
	\caption{An algorithm for computing the optimal interdiction value function~$y$ based on intervals between breakpoints.
		\label{alg:bp-intervals}}
\end{algorithm}

\textbf{Algorithms for the minimum spanning tree interdiction problem.}
We now look at the special case for graphic matroids and see how our running times behave when we can implement the functions $f$, $G$, and $H$.
First we note that an independence test for graphic matroids is a cycle test, which can be done in $\mathcal{O}(\log n)$ time using the dynamic tree data structure of \cite{sleator1981data}.
This yields the following upper bound as a direct consequence of \cref{theorem upper bound 1 computation,theorem mk^2 changepoints} as well as a lower bound using the proof technique from \cref{theorem lower bound} combined with the lower bound of $\Omega(m \log n)$ for the parametric minimum spanning tree problem proved in \cite{eppstein1995geometric}.
\begin{corollary}\label{theorem bounds mst}
	The optimal interdiction value function $y$ of the parametric minimum spanning tree one-interdiction problem has at least $\Omega(m \log n)$ and at most $\mathcal{O}(m n^2)$ many changepoints that can be computed in $\mathcal{O}(m^2n\log n)$ time.
\end{corollary}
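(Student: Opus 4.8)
The plan is to obtain all three assertions by specializing the general matroid results to the graphic case. The key observation is that a basis of a graphic matroid is a spanning tree, so its rank is $k = n - 1 = \mathcal{O}(n)$, and a single independence test is a cycle test, which the dynamic tree data structure of \cite{sleator1981data} performs in $\mathcal{O}(\log n)$ time; hence $f(m) = \mathcal{O}(\log n)$.

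The upper bound on the number of changepoints and the running time then follow by substitution. Applying \cref{theorem mk^2 changepoints} with $k = \mathcal{O}(n)$ immediately gives $\mathcal{O}(mk^2) = \mathcal{O}(mn^2)$ changepoints. For the running time I would invoke \cref{theorem upper bound 1 computation}, whose bound $\mathcal{O}(m^2(kf(m) + k^{\frac{1}{3}}\log(mk)))$ becomes $\mathcal{O}(m^2(n\log n + n^{\frac{1}{3}}\log(mn)))$ after inserting $f(m) = \mathcal{O}(\log n)$ and $k = \mathcal{O}(n)$. Since $n\log n$ dominates $n^{\frac{1}{3}}\log(mn)$ in the relevant parameter regime, this simplifies to the claimed $\mathcal{O}(m^2 n\log n)$.

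For the lower bound I would use the proof technique of \cref{theorem lower bound}. There, every ground-set element $e$ is doubled by a parallel copy $e'$ with $w(e',\lambda) = w(e,\lambda)$; for a graphic matroid this is exactly the parallel extension that adds a parallel edge to each edge of the underlying graph, so the doubled matroid is again graphic and, after deleting any single edge, still has a spanning tree, so \cref{assumptions} remains satisfied. Starting from a graph $G$ whose parametric minimum spanning tree value function $w$ has $\Omega(m\log n)$ breakpoints, cf.\ \cite{eppstein1995geometric}, and forming its edge-doubled graph $G'$, the argument of \cref{theorem lower bound} gives $y' \equiv w$ (the doubled weights do not change the spanning tree weight), so the interdiction value function $y'$ of $G'$ inherits all $\Omega(m\log n)$ breakpoints as changepoints. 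Because $G'$ has $m' = 2m$ edges and $n' = n$ vertices, this equals $\Omega(m'\log n')$ in the parameters of $G'$ itself, which is the claimed bound. The main obstacle is verifying that the abstract doubling of \cref{theorem lower bound} stays within the graphic class and that $m$ and $n$ transform as stated; once this is confirmed, Eppstein's lower bound transfers directly.
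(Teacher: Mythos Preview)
Your proposal is correct and mirrors the paper's own argument: the paper derives the corollary directly from \cref{theorem upper bound 1 computation} and \cref{theorem mk^2 changepoints} after substituting $k = n-1$ and $f(m) = \mathcal{O}(\log n)$ via the dynamic trees of \cite{sleator1981data}, and obtains the lower bound by combining the doubling construction of \cref{theorem lower bound} with Eppstein's $\Omega(m\log n)$ bound for parametric minimum spanning trees. Your explicit verification that the doubled matroid remains graphic (via parallel edges) and that the parameters transform as $m' = 2m$, $n' = n$ is a welcome elaboration of a step the paper leaves implicit.
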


The running time in \cref{theorem bounds mst} is still dominated by solving $m$ parametric minimum spanning tree problems.
In order to get an improvement by using \cref{theorem upper bound 1 computation improved}, we need to implement the function $G(m)$.
This means that we need to keep track of the 2-connected components of the graph $H=(V,\emptyset)$ to which we iteratively add edges, a task that can be done in \O{m+n\log n} or $\O{m\alpha(m)}$ time cf.\ \cite{WT92}.
\begin{corollary} \label{thm:improved-alg}
	\Cref{problem mst} can be solved     in $\mathcal{O}(mn^2\log n + m^2n^{\tfrac{1}{3}}\log n)$ time.
\end{corollary}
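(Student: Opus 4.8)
The plan is to specialise the general running time of \cref{theorem upper bound 1 computation improved} to graphic matroids, substituting concrete implementations for the two operations $f$ and $G$ that appear in it (the operation $H$ is not needed here, since this corollary builds on \cref{theorem upper bound 1 computation improved} rather than on \cref{thm: algorithm breakpoint intervals}). Recall that \cref{theorem upper bound 1 computation improved} computes $y$ in time $\O{m^2f(m) + mk^2f(m)+m^2k^{\frac{1}{3}}\log(mk)+mG(m)}$, so the whole argument reduces to bounding $f(m)$ and $G(m)$ in the graphic setting and then collecting terms using $k = n-1$.

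For the independence test I would use, as already recorded before \cref{theorem bounds mst}, that independence in a graphic matroid is acyclicity, so that a single test is a cycle test costing $f(m)\in\O{\log n}$ via the dynamic tree structure of \cite{sleator1981data}. For the component updates I would invoke the fact established in \cref{sec:prelim} that the components of a graphic matroid are exactly the $2$-connected components of the underlying graph; maintaining these under the incremental edge insertions that define $G(m)$ is precisely the on-line biconnectivity problem, for which \cite{WT92} gives $G(m)\in\O{m+n\log n}$ (equivalently $\O{m\alpha(m)}$).

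It then remains to plug these in and simplify. With $f(m)\in\O{\log n}$, $k = n-1 = \Theta(n)$ and $G(m)\in\O{m+n\log n}$, the four summands become $m^2\log n$, $mn^2\log n$, $m^2 n^{\frac{1}{3}}\log(mn)$ and $m^2 + mn\log n$. The only simplification needing a word of justification is $\log(mk)\in\O{\log n}$, which holds because $m\in\O{n^2}$ in a simple graph. After that, $m^2\log n$ and $m^2$ are dominated by $m^2 n^{\frac{1}{3}}\log n$, while $mn\log n$ is dominated by $mn^2\log n$, leaving exactly the two surviving terms $mn^2\log n$ and $m^2 n^{\frac{1}{3}}\log n$, which is the claimed bound.

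Since the argument is essentially pure substitution, I expect no genuine analytic obstacle. The only point demanding care is the correctness of the implementation of $G$: one must confirm that the abstract ``component update'' operation on $\M|E'$ really coincides with incrementally maintaining the $2$-connected components of the underlying graph, so that the amortised guarantee of \cite{WT92} legitimately applies over the sequence of single-edge insertions defining $G(m)$. Once this identification is in place, the substitution and the term-by-term domination above complete the proof.
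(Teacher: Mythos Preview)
Your proposal is correct and follows precisely the paper's approach: the corollary is obtained by specialising \cref{theorem upper bound 1 computation improved} to graphic matroids, plugging in $f(m)\in\O{\log n}$ via \cite{sleator1981data} and $G(m)\in\O{m+n\log n}$ via \cite{WT92}, together with $k=n-1$. Your substitution and term-domination analysis are in fact more explicit than the paper's, which leaves these routine simplifications (including the $\log(mk)\in\O{\log n}$ step) implicit.
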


Finally, to make use of \cref{alg:bp-intervals} and \cref{thm: algorithm breakpoint intervals}, we need to implement $H(m)$ as well.
This can be done in $\mathcal{O}(m\alpha(m))$ time, cf.\ \cite{IWANO}.
Sadly, the resulting running time of $\mathcal{O}(m^2n\alpha(m) + mn^2\log n)$ only improves the one from \cref{theorem upper bound 1 computation}, but not the one in \cref{thm:improved-alg}.

\section{Conclusion}\label{sec:conclusion}
In this article, we have investigated the parametric matroid one-interdiction problem which has not been considered in the literature so far.
Given a matroid where the weight of each element depends on a real parameter from an interval, the goal is to compute for each parameter value a so-called most vital element, that is, an element that, when removed from the matroid, maximizes the weight of the minimum weight basis.

We have proved structural properties of the problem such as a lower bound of $\Omega(m k^{\frac{1}{3}})$ and two upper bounds of $\mathcal{O}(m^2 k^{\frac{1}{3}}\alpha(k))$ and $\mathcal{O}(mk^2)$ on the number of changepoints of the piecewise linear and non-concave optimal interdiction value function.
Here $m$ is the number of elements, $k$ the rank of the matroid, and $\alpha$ is a functional inverse of Ackermann's function.
Using these properties, we have developed algorithms whose running time depends on different matroid operations. 
Finally, we have specified an implementation of these operations for graphical matroids to obtain a polynomial-time algorithm for the parametric minimum spanning tree one-interdiction problem.
	
\section*{Acknowledgments}
	This work was partially supported by the project Ageing Smart funded by the Carl-Zeiss-Stiftung and the DFG grant RU 1524/8-1, project number 508981269.
    Furthermore we would like to thank Dorothee Henke for improvements and useful hints to the literature.
	
	
\bibliography{bib_matroid}
	
\end{document}